\documentclass[10pt]{article}

\addtolength{\oddsidemargin}{-0.4in}
\addtolength{\evensidemargin}{-0.4in}
\addtolength{\textwidth}{0.8in}


\usepackage{amsmath,amssymb,amsthm}
\usepackage{bbm}
\usepackage{mathtools}

\newcommand{\as}{\\[.6em]}
\newcommand{\As}{\\[.9em]}
\newcommand{\AS}{\\[1.2em]}
\newcommand{\val}{\,\,\,}

\newcommand{\bela}[1]{\begin{equation}\label{#1}}
\newcommand{\ela}{\end{equation}}
\newcommand{\bear}[1]{\begin{array}{#1}}
\newcommand{\ear}{\end{array}}

\newcommand{\bolda}{\mbox{\boldmath$a$}}
\newcommand{\boldb}{\mbox{\boldmath$b$}}

\renewcommand{\r}{\mbox{\boldmath $r$}}
\newcommand{\F}{\mbox{\boldmath $F$}}

\newcommand{\bS}{\mbox{\boldmath $S$}}
\newcommand{\bT}{\mbox{\boldmath $T$}}

\newcommand{\bv}{\mbox{\boldmath $v$}}

\newcommand{\bomega}{\mbox{\boldmath $\omega$}}
\newcommand{\del}{\partial}

\newcommand{\Z}{\mathbbm{Z}}
\renewcommand{\P}{\mathbbm{P}}
\newcommand{\R}{\mathbbm{R}}

\arraycolsep.2em

\theoremstyle{definition}
\newtheorem{theorem}{Theorem}[section]

\theoremstyle{definition}
\newtheorem{definition}[theorem]{Definition}


\newcommand{\Ft}{\tilde{\mbox{\boldmath$F$}}}
\newcommand{\br}{\mbox{\boldmath $r$}}
\newcommand{\btr}{\tilde{\mbox{\boldmath $r$}}}
\newcommand{\al}{\alpha}
\newcommand{\alb}{\bar{\alpha}}
\newcommand{\ab}{\bar{a}}
\newcommand{\bb}{\bar{b}}
\newcommand{\ub}{\bar{u}}
\newcommand{\vb}{\bar{v}}
\newcommand{\ka}{\kappa}
\newcommand{\kab}{\bar{\kappa}}
\newcommand{\C}{\mathcal{C}}
\newcommand{\T}{\bar{T}}
\newcommand{\f}{\bar{f}}
\newcommand{\g}{\bar{g}}
\newcommand{\Tb}{\bar{T}}
\newcommand{\vpt}{\tilde{\varphi}}
\newcommand{\psit}{\tilde{\psi}}
\newcommand{\bvpt}{\tilde{\mbox{\boldmath$\varphi$}}}
\newcommand{\bpsit}{\tilde{\mbox{\boldmath$\psi$}}}
\newcommand{\s}{\sigma}

\newcommand{\ba}{\begin{align}}
\newcommand{\bpm}{\begin{pmatrix}}
\newcommand{\epm}{\end{pmatrix}}
\newcommand{\beq}{\begin{equation}}
\newcommand{\ee}{\end{equation}}

\mathtoolsset{showonlyrefs,showmanualtags}


\title{Surface theory in discrete projective differential geometry. I.\ A canonical frame and an integrable discrete Demoulin system}

\author{W.K.\ Schief\\ \small
            School of Mathematics and Statistics\\ \small
            The University of New South Wales\\ \small 
            Sydney, NSW 2052\\ \small
           Australia
           \and
           A.\ Szereszewski\\ \small
           Institute of Theoretical Physics\\ \small
           Faculty of Physics\\ \small
           University of Warsaw\\ \small
          Poland}

\date{26th October 2017}
           
\begin{document}

\maketitle

\begin{abstract}
We present the first steps of a procedure which discretises surface theory in classical projective differential geometry in such a manner that underlying integrable structure is preserved. We propose a canonical frame in terms of which the associated projective Gauss-Weingarten and Gauss-Mainardi-Codazzi equations adopt compact forms. Based on a scaling symmetry which injects a parameter into the linear Gauss-Weingarten equations, we set down an algebraic classification scheme of discrete projective minimal surfaces which turns out to admit a geometric counterpart formulated in terms of discrete notions of Lie quadrics and their envelopes. In the case of discrete Demoulin surfaces, we derive a B\"acklund transformation for the underlying discrete Demoulin system and show how the latter may be formulated as a two-component generalisation of the integrable discrete Tzitz\'eica equation which has originally been derived in a different context. At the geometric level, this connection leads to the retrieval of the standard discretisation of affine spheres in affine differential geometry.
\end{abstract}

\section{Introduction}

Projective differential geometry (see \cite{OvsienkoTabachnikov2005, Eastwood2008} and references therein) has been demonstrated to be a rich source of surface geometries which are governed by integrable partial differential equations \cite{FerapontovSchief1999,Ferapontov2000}. In this context, the appropriate formalism has proven to be that of the ``American School'' founded by Wilczynski who, in fact, initiated projective differential geometry \cite{Wilczynski1907,Wilczynski1908,Wilczynski1909}. For instance, projective minimal and isothermal-asymptotic surfaces admit B\"acklund transformations which both act within these classes of surfaces and leave invariant the underlying projective Gauss-Mainardi-Codazzi equations \cite{RogersSchief2002}. The latter include periodic Toda lattice type systems and the stationary modified Nizhnik-Veselov-Novikov (mNVN) equation \cite{Ferapontov1999}. It is recalled that the mNVN equation constitutes a 2+1-dimensional integrable extension of the celebrated modified Korteweg-de Vries (mKdV) equation in which the two ``spatial'' variables appear on an equal footing \cite{Bogdanov1987}. 
Projective geometry also plays a central role in the geometric treatment of {\em discrete} integrable systems in that, for instance, the master Hirota (dKP), Miwa (dBKP) and dCKP equations are intimately related to classical incidence theorems of projective geometry  (see \cite{Schief2003,KingSchief2006} and references therein).

Wilczynski's formalism, which was adopted by Bol in the first two volumes of his monograph {\em Projektive Differentialgeometrie} \cite{Bol1950,Bol1954}, turns out to be custom-made in connection with not only the isolation of integrable structure but also the development of a canonical discrete analogue of projective differential geometry within the field of {\em discrete differential geometry} \cite{BobenkoSuris2008}. In \cite{McCarthySchief2018}, discrete analogues of a variety of classes of special surfaces in a three-dimensional real projective space $\P^3$ which feature prominently in Bol's second volume such as projective minimal surfaces, Q surfaces and complex surfaces have been proposed and analysed mainly in a geometric manner. By construction, these discretisations are natural in geometric terms but whether and in which sense this is reflected in the algebraic properties has only been touched upon. For instance, is it possible to introduce natural frames such as the Wilczynski frame (see, e.g., \cite{FerapontovSchief1999}) in terms of which the underlying discrete Gauss-Mainardi-Codazzi equations are compact and tractable? Does the discretisation scheme preserve any integrable structure which is present in the classical continuous setting? It is one aim of this paper to answer these questions in the affirmative.

We begin by deriving from first principles a canonical frame associated with discrete asymptotic nets. The latter have been used extensively \cite{BobenkoSuris2008} as discretisations of asymptotic nets on hyperbolic surfaces in discrete differential geometry. They naturally give rise to a discretisation of classical Lie quadrics \cite{HuhnenVenedeyRoerig2014,McCarthySchief2017} to which the discrete canonical frame is adapted. It is observed that this frame does not appear to have an analogue in the continuous setting. In terms of the canonical frame, the associated projective Gauss-Weingarten equations involve sparse matrices so that their integrability conditions lead to compact discrete projective Gauss-Mainardi-Codazzi equations. We then introduce a natural parameter-dependent scaling which leaves invariant all but one Gauss-Mainardi-Codazzi equations. Complete invariance leads to a constraint on the Gauss-Mainardi-Codazzi equations which coincides with that defining discrete projective minimal surfaces as proposed in \cite{McCarthySchief2018}. It is recalled that the analogous scaling symmetry is known to encode projective minimal surfaces in the classical setting \cite{RogersSchief2002}.

As in the continuous case \cite{Bol1954,FerapontovSchief1999,RogersSchief2002}, the form of the constrained Gauss-Mainardi-Codazzi equations lends itself to an algebraic classification of discrete projective minimal surfaces. In fact, we show that this approach defining different types of discrete projective minimal surfaces admits a geometric analogue based on the notion of discrete envelopes of the above-mentioned lattices of Lie quadrics. The geometric classification of classical projective minimal surfaces in terms of envelopes of Lie quadrics may be found in, for instance, \cite{Bol1954,Sasaki2005}. In order to embark on a study of the integrability properties of the Gauss-Mainardi-Codazzi equations underlying discrete projective minimal surfaces, we then focus on the case of discrete Demoulin surfaces. Based on the classical Pl\"ucker correspondence between lines in $\P^3$ and points in the Pl\"ucker quadric embedded in $\P^5$, we relate the canonical frame for discrete Demoulin surfaces to a frame of Wilczynski type, in terms of which the discrete Gauss-Mainardi-Codazzi equations adopt the form of a discrete analogue of the classical integrable Demoulin system \cite{Finikov1937}. The latter is then shown to be preserved by a discrete analogue of the B\"acklund transformation for the Demoulin system \cite{RogersSchief2002}.

As in the classical case, the discrete Demoulin system admits an even deeper reduction which turns out to be the discrete Tzitz\'eica equation proposed in \cite{Schief1999}. The latter has been demonstrated \cite{BobenkoSchief1999} to encode an integrable discretisation of the classical class of affine spheres in (centro-)affine differential geometry (see, e.g., \cite{Schief2000} and references therein). Remarkably, the (scaled) discrete Wilczynski frame corresponding to the Tzitz\'eica reduction turns out to capture nothing but standard discrete affine spheres subject to projective transformations. Hence, within the established framework of discrete differential geometry, a link between the discretisation technique analysed here in the setting of discrete projective differential geometry and the {\em a priori} unrelated discretisation procedure for affine spheres set down in \cite{BobenkoSchief1999} has been found.

\section{Surface theory in projective differential geometry}

In order to set the discrete theory in context, we here briefly recall relevant classical facts. Thus, in the classical theory \cite{Bol1954}, one is concerned with surfaces $\Sigma$ in a three-dimensional projective space $\mathbb{P}^3$ represented in terms of homogeneous coordinates by \mbox{$\r:\mathbb{R}^2\rightarrow\mathbb{R}^4$},
where $(x,y)\in\mathbb{R}^2$ are taken to be {\em asymptotic coordinates} on $\Sigma$. Since we confine ourselves to hyperbolic surfaces, the asymptotic coordinates are real. Then, it is well known \cite{Bol1954,FerapontovSchief1999,RogersSchief2002} that one may choose particular homogeneous coordinates, known as the {\em Wilczynski lift}  \cite{Wilczynski1907,Wilczynski1908,Wilczynski1909}, such that $\r$ satisfies the {\em projective Gauss-Weingarten equations}
    \begin{equation*}
        \mbox{\boldmath$r$}_{xx}=p\mbox{\boldmath$r$}_y+\frac{1}{2}\left(\frac{q_{xx}}{q}-\frac{q_x^2}{2q^2} + \frac{\beta}{q^2}-p_y\right)\mbox{\boldmath$r$},\quad
        \mbox{\boldmath$r$}_{yy}=q\mbox{\boldmath$r$}_x+\frac{1}{2}\left(\frac{p_{yy}}{p}-\frac{p_y^2}{2p^2} + \frac{\alpha}{p^2}-q_x\right)\mbox{\boldmath$r$}.
    \end{equation*}
It is noted that the Wilczynski lift is unique up to a group of transformations which involves simultaneously reparametrising the asymptotic lines and scaling the homogeneous coordinates $\r$ (see, e.g., \cite{FerapontovSchief1999}). The compatibility condition $\mbox{\boldmath$r$}_{xxyy}=\mbox{\boldmath$r$}_{yyxx}$ leads to the {\em projective Gauss-Mainardi-Codazzi equations}
\bela{gmc}
    \begin{aligned}
        (\ln p)_{xy}&=pq+\frac{\mathcal{A}}{p},\val&
        \mathcal{A}_y&=-p\left(\frac{\alpha}{p^2}\right)_x,\quad\frac{\alpha_y}{p}=\frac{\beta_x}{q}\\
        (\ln q)_{xy}&=pq+\frac{\mathcal{B}}{q},\val&
        \mathcal{B}_x&=-q\left(\frac{\beta}{q^2}\right)_y
    \end{aligned}
\ela
with \eqref{gmc}$_{1,4}$ being regarded as definitions of the functions $\mathcal{A}$ and $\mathcal{B}$. In the following, we exclude ruled surfaces so that $p\neq0$ and $q\neq0$.

\subsection{Algebraic classification of projective minimal surfaces}
    \begin{definition}
        A surface in $\mathbb{P}^3$
        is said to be {\em projective minimal} if it is critical for the
        area functional
        $\iint pq\,dxdy.$
    \end{definition}
The derivation of the associated Euler-Lagrange equations may be found in \cite{Thomsen1925}.
    \begin{theorem}\label{elcond}
    A surface in $\mathbb{P}^3$ is projective minimal if
    and only if
        \begin{equation}\label{eulerlagrange}
            \frac{\alpha_y}{p}=\frac{\beta_x}{q}=0,
        \end{equation}
that is, $\alpha=\alpha(x)$ or, equivalently (by virtue of \eqref{gmc}$_3$), $\beta=\beta(y)$.
 \end{theorem}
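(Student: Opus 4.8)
The plan is to derive the Euler--Lagrange equations for the area functional $\iint pq\,dx\,dy$ directly, regarding a projective surface as a solution $(p,q,\alpha,\beta)$ of the Gauss--Mainardi--Codazzi system \eqref{gmc} and performing a \emph{constrained} variation. Since \eqref{gmc}$_3$ asserts that $\alpha_y/p=\beta_x/q$ holds identically, the stated condition \eqref{eulerlagrange} only requires that this common quantity vanish; consequently the equivalence $\alpha=\alpha(x)\Leftrightarrow\beta=\beta(y)$ is immediate (divide by $p\neq0$, use \eqref{gmc}$_3$, then $q\neq0$) once the Euler--Lagrange equation has been reduced to $\alpha_y=0$. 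The entire substance of the proof therefore lies in that reduction.

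First I would fix the variational setup. Introduce a one-parameter family of surfaces and write a dot for the derivative of the invariants at the base surface. Because the area density $pq\,dx\,dy$ is invariant under the Wilczynski gauge group (reparametrisation of asymptotic lines together with scaling of $\r$), varying over surfaces is the same as varying over solutions of \eqref{gmc}, and the gauge directions contribute nothing. The first variation is
\begin{equation*}
\delta\!\iint pq\,dx\,dy=\iint\bigl(q\,\dot p+p\,\dot q\bigr)\,dx\,dy,
\end{equation*}
but $\dot p,\dot q,\dot\alpha,\dot\beta$ are tied together by the linearisation of \eqref{gmc}. Keeping $\mathcal A,\mathcal B$ (defined through \eqref{gmc}$_{1,4}$) as auxiliary fields, the natural device is to adjoin Lagrange multipliers $\mu_1,\dots,\mu_5$, one per scalar equation in \eqref{gmc}, and to demand
\begin{equation*}
\iint\Bigl(q\,\dot p+p\,\dot q+\textstyle\sum_i\mu_i\,\dot C_i\Bigr)\,dx\,dy=0
\end{equation*}
for all free $\dot p,\dot q,\dot\alpha,\dot\beta,\dot{\mathcal A},\dot{\mathcal B}$, where $C_i=0$ denotes the $i$-th equation of \eqref{gmc}.

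Second, I would extract the multiplier system by integrating by parts and collecting the coefficient of each free variation. Varying $\mathcal A$ and $\mathcal B$ yields the algebraic relations $\mu_1=-p\,\mu_{2,y}$ and $\mu_4=-q\,\mu_{5,x}$, which eliminate $\mu_1,\mu_4$; varying $\alpha$ and $\beta$ couples $\mu_2,\mu_3$ and $\mu_5,\mu_3$; and the (more laborious) variations in $p$ and $q$ produce the two remaining equations carrying the source terms $q$ and $p$ from the area density. The decisive simplification is to feed the Gauss--Mainardi--Codazzi equations \eqref{gmc} back in to re-express $\mathcal A,\mathcal B$ and their derivatives, after which the over-determined multiplier system should be consistent precisely when a single scalar obstruction vanishes. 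I expect that obstruction to be exactly $\alpha_y$ (equivalently $\beta_x$, via \eqref{gmc}$_3$): the first variation should collapse, modulo boundary terms, to an expression $\iint\nu\,\alpha_y\,dx\,dy$ with $\nu$ a non-vanishing factor in $p,q$ and the multipliers, so that stationarity for all admissible deformations forces $\alpha_y=0$. The converse, that $\alpha_y=0$ renders the area stationary, follows by exhibiting a multiplier tuple solving the reduced system in that case.

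The main obstacle I anticipate is the bookkeeping of the constrained variation: carrying out the integrations by parts so that the five coupled linearised constraints collapse to the \emph{single} condition $\alpha_y=0$ rather than to a spurious larger system, and verifying that the reduction does not quietly assume non-degeneracy that fails for ruled surfaces (excluded here by $p\neq0$, $q\neq0$). An alternative that side-steps this computation is to invoke Thomsen's derivation of the Euler--Lagrange equations \cite{Thomsen1925} and then check purely algebraically, using \eqref{gmc}, that they are equivalent to \eqref{eulerlagrange}.
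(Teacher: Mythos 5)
Your proposal never actually establishes the one thing the theorem asserts. The framework you set up --- varying $\iint pq\,dx\,dy$ over solutions of \eqref{gmc} with Lagrange multipliers adjoined to the five constraint equations --- is reasonable, but the decisive step, namely that the over-determined multiplier system is consistent precisely when $\alpha_y=0$, is offered only as what you ``expect''; you do not carry out the integrations by parts, write down the multiplier equations, or demonstrate the collapse to a single scalar obstruction. That collapse \emph{is} the theorem: everything before it (the equivalence $\alpha=\alpha(x)\Leftrightarrow\beta=\beta(y)$ via \eqref{gmc}$_3$ and $p,q\neq0$) is trivial, and everything after it (the converse, exhibiting a multiplier tuple when $\alpha_y=0$) is likewise deferred. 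This computation is genuinely laborious --- it is essentially the content of Thomsen's 1925 paper --- so it cannot be waved through as routine bookkeeping. As written, your submission is a plan for a proof, not a proof.

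For comparison: the paper itself gives no proof of Theorem \ref{elcond}. It is recalled as a classical fact, and the sentence immediately preceding it reads ``The derivation of the associated Euler-Lagrange equations may be found in \cite{Thomsen1925}.'' In other words, the paper's ``proof'' is exactly your fallback option: invoke Thomsen's derivation and identify his Euler--Lagrange equations with \eqref{eulerlagrange} by means of \eqref{gmc}. Had you led with that route and carried out the algebraic identification, you would have matched the paper. If instead you want the self-contained variational argument, you must actually derive the multiplier system and prove the reduction to $\alpha_y=0$; you would also need to justify, rather than merely assert, that stationarity over solutions of \eqref{gmc} is equivalent to stationarity over surfaces (this rests on the fundamental theorem for the projective Gauss--Weingarten system together with the gauge invariance you mention).
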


\begin{definition}
A projective minimal surface $\Sigma$ is said to be
    \begin{itemize}
        \item[(i)]
            {\em generic} if $\alpha\neq 0$ and $\beta\neq 0$,\vspace{0.5mm}
        \item[(ii)]
            of {\em Godeaux-Rozet} type if $\alpha\neq0$ and $\beta=0$ or
            $\alpha=0$ and $\beta\neq0$,\vspace{0.5mm}
        \item[(iii)]
            of {\em Demoulin type} if $\alpha=\beta=0$. If, in addition,
            $p=q$ then $\Sigma$ is said to be of {\em Tzitz\'{e}ica} type.
    \end{itemize}
\end{definition}
 
It is noted that, in this situation, the above-mentioned group of transformations which acts within the class of Wilczynski frames may be exploited to normalise $\alpha$ and $\beta$ to be one of $-1,\,1$ or $0$. This normalisation corresponds to canonical forms of the integrable system \eqref{gmc}-\eqref{eulerlagrange} underlying projective minimal surfaces \cite{FerapontovSchief1999}.

\subsection{Geometric classification of projective minimal surfaces}
Projective minimal surfaces may also be classified geometrically based on the notion of Lie quadrics and their envelopes. A {\em Lie quadric} is a privileged member of the three-parameter family of quadrics which has second-order contact with a surface \cite{Bol1954} at a given point. In the current context, the key observation is the following \cite{Bol1954,Ferapontov2000}. 
    \begin{theorem}
        The Lie quadric $Q$ (at a point $\r$) of a surface $\Sigma$ admits the parametrisation
            \begin{equation*}
                Q=\btr^{12}+\mu\btr^1+\nu\btr^2+\mu\nu\btr,
            \end{equation*}
        where $\mu$ and $\nu$ parametrise the two families of generators of $Q$ and
        $\{\btr,\btr^1,\btr^2,\btr^{12}\}$ is the {\em Wilczynski frame} given by
\bela{C2}
 \begin{gathered}
  \btr = \r,\quad \btr^1 = \r_x  - \frac{1}{2}\frac{q_x}{q}\r,\quad  \btr^2 = \r_y  - \frac{1}{2}\frac{p_y}{p}\r\as
\btr^{12} = \r_{xy} - \frac{1}{2}\frac{p_y}{p}\r_x - \frac{1}{2}\frac{q_x}{q}\r_y + \left(\frac{1}{4}\frac{p_yq_x}{pq} - \frac{1}{2}pq\right)\r .
 \end{gathered} 
\ela
    \end{theorem}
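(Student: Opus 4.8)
The plan is to read the asserted parametrisation as the Segre embedding of a quadric expressed in the coordinates supplied by the Wilczynski frame, and then to verify directly that this quadric osculates $\Sigma$ to second order. Writing a generic point of $\P^3$ as $w_0\btr+w_1\btr^1+w_2\btr^2+w_3\btr^{12}$, the map $(\mu,\nu)\mapsto\btr^{12}+\mu\btr^1+\nu\btr^2+\mu\nu\btr$ has image $(w_0,w_1,w_2,w_3)=(\mu\nu,\mu,\nu,1)$ and therefore sweeps out exactly the quadric $w_0w_3=w_1w_2$; fixing $\mu$ or $\nu$ produces the two rulings, and the limits $\mu\to\infty$ and $\nu\to\infty$ return the asymptotic tangents $\ospan(\btr,\btr^1)=\ospan(\r,\r_x)$ and $\ospan(\btr,\btr^2)=\ospan(\r,\r_y)$ through $\r=\btr$. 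It therefore suffices to show that $\{w_0w_3=w_1w_2\}$ is the Lie quadric.

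First I would recall that a quadric $\{\,\bx^{\top}S\bx=0\,\}$ has second-order contact with $\Sigma$ at $\r$ precisely when $f(x,y)=\r^{\top}S\r$ vanishes together with its first and second derivatives, i.e.\ $f=f_x=f_y=f_{xx}=f_{xy}=f_{yy}=0$. Substituting the projective Gauss-Weingarten equations to eliminate $\r_{xx}$ and $\r_{yy}$, these collapse to
$$S(\r,\r)=S(\r,\r_x)=S(\r,\r_y)=S(\r_x,\r_x)=S(\r_y,\r_y)=0,\qquad S(\r,\r_{xy})+S(\r_x,\r_y)=0,$$
where $S(\cdot,\cdot)$ denotes the polarisation of the quadratic form. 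Relative to the basis $\{\r,\r_x,\r_y,\r_{xy}\}$ this displays the solutions as the three-parameter family referred to in the statement, and the privileged Lie member is cut out by imposing, in addition, the symmetric third-order normalisation $f_{xxy}=f_{xyy}=0$ (so that the asymptotic tangents are genuine generators to one further order) together with the canonical fixing of the single remaining transversal parameter as in \cite{Bol1954,Ferapontov2000}.

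The most efficient route to the frame representation, and the computation I would actually perform, is a second-order Taylor check directly in \eqref{C2}. Expanding $\r(x+s,y+t)$ to second order and re-expressing $\r_x,\r_y,\r_{xx},\r_{xy},\r_{yy}$ in the Wilczynski basis---using the definitions \eqref{C2} and again eliminating $\r_{xx},\r_{yy}$ by Gauss-Weingarten---gives $w_0=1+O(1)$, $w_1=s+O(2)$, $w_2=t+O(2)$ and, decisively, $w_3=st+O(3)$, whence $w_0w_3-w_1w_2=st-st+O(3)=O(3)$. This exhibits $\{w_0w_3=w_1w_2\}$ as a second-order contact quadric, and extending the same expansion one order shows that the mixed $s^2t$ and $st^2$ terms drop out, confirming the privileged third-order normalisation. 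I expect the genuine difficulty to be bookkeeping rather than conceptual: one must propagate the first- and second-order coefficients of \eqref{C2}---those built from $p_y/p$, $q_x/q$, $pq$ and the Gauss-Weingarten potentials---consistently through the change of basis, and, above all, correctly pin down the one transversal parameter that separates the Lie quadric from the other members of its pencil, since this is the datum that contact order alone does not supply. Once the frame coordinates of the second-order jet of $\r$ are in hand, the relation $w_0w_3=w_1w_2$---equivalently the parametrisation $\btr^{12}+\mu\btr^1+\nu\btr^2+\mu\nu\btr$---follows immediately.
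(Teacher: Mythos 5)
Your preliminary reductions are correct as far as they go: the parametrisation does sweep out the quadric $w_0w_3=w_1w_2$ in the frame coordinates, second-order contact does collapse (via the Gauss--Weingarten equations) to the six polarised conditions you list, and your Taylor check does show both $w_0w_3-w_1w_2=O(3)$ and the cancellation of the mixed $s^2t$, $st^2$ terms. But these checks cannot prove the theorem, and the obstruction is the one you yourself flag and then set aside: the ``one transversal parameter'' is exactly what contact analysis at the point is structurally blind to, and you dispose of it by appealing to \cite{Bol1954,Ferapontov2000} --- that is, to the very statement being proved. Concretely, replace $\btr^{12}$ by $\btr^{12}+c\btr$ for an arbitrary constant $c$. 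Every condition you verify --- the six second-order conditions together with $f_{xxy}=f_{xyy}=0$, equivalently $S(\r_x,\r_{xy})=S(\r_y,\r_{xy})=0$ --- pairs $\r_{xy}$ with itself at most zero times, whereas the shift changes only the single entry $S(\r_{xy},\r_{xy})$, by $-c$. Hence everything you compute holds identically for the whole pencil $\btr^{12}+c\btr+\mu\btr^1+\nu\btr^2+\mu\nu\btr$, $c\in\R$, and in particular can never detect the term $-\tfrac{1}{2}pq$ in the coefficient of $\r$ in \eqref{C2}, which is precisely the nontrivial content of the theorem. Note also that ``more contact'' can never be the distinguishing principle: for any member of this pencil one finds $f_{xxx}=4p\,S(\r_x,\r_y)=-2p\neq 0$, so the Lie quadric does not osculate the surface to third order, and your parenthetical gloss that the extra conditions make the tangents ``generators to one further order'' is the most that point-contact can give.

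What is missing is the defining property of the Lie quadric, which is not a contact condition at the point but an osculation condition along the asymptotic tangent congruence: it is the limit of quadrics containing three consecutive asymptotic tangents, i.e.\ the quadric containing the line spanned by $\r(0,y)$, $\r_x(0,y)$ to second order in $y$ (that the two families give the same quadric is Lie's theorem). Writing out $g(y;\lambda,\mu)=S\bigl(\lambda\r+\mu\r_x,\lambda\r+\mu\r_x\bigr)$ and imposing $g=g_y=g_{yy}=0$ at $y=0$, the Gauss--Weingarten equations reduce this to your eight conditions plus exactly one more,
\begin{equation}
  S(\r_{xy},\r_{xy})+pq\,S(\r_x,\r_y)=0,
\end{equation}
which makes the nine linear conditions determine $S$ up to scale. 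Evaluated on the pencil above this ninth condition reads $-c=0$, which is what forces the coefficient $\tfrac{1}{4}\tfrac{p_yq_x}{pq}-\tfrac{1}{2}pq$ in $\btr^{12}$ and identifies the Lie quadric with the displayed one. This step is conceptual, not bookkeeping, and it is where $p$ and $q$ genuinely enter. For calibration: the paper itself offers no proof --- the theorem is recalled as a classical fact from \cite{Bol1954,Ferapontov2000} --- so the only complete argument is the classical one just sketched; your proposal reproduces its routine part and omits its crux.
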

In the above, for brevity, we do not distinguish notationally between a Lie quadric in
$\mathbb{P}^3$ and its representation in the space of
homogeneous coordinates $\mathbb{R}^4$.
It is also observed that the lines $(\btr,\btr^1)$ and
$(\btr,\btr^2)$ are tangent to the surface $\Sigma$, while the line
$(\btr,\btr^{12})$, known as the  {\em first directrix
of Wilczynski}, is transversal to $\Sigma$ and plays
the role of a projective normal.
    \begin{definition}
        A surface $\Omega$ parametrised in terms of homogeneous coordinates by
        $\bomega:\mathbb{R}^2\rightarrow\mathbb{R}^4$ is an {\em envelope}
        of the two-parameter family of Lie quadrics $\{Q(x,y)\}$ associated with a surface $\Sigma$ if $\bomega(x,y)\in Q(x,y)$ such that
        $\Omega$ touches $Q(x,y)$ at $\bomega(x,y)$.
    \end{definition}
We note that, in particular, $\Sigma$ is itself an envelope of
$\{Q\}$. Generically, there exist four additional envelopes as
stated below \cite{Bol1954}.
    \begin{theorem}\label{envel}
        If $\alpha,\,\beta\geq0$ then the Lie quadrics $\{Q\}$ possess
        four real additional envelopes
            \bela{envelope}
                \bomega =\btr^{12}+\epsilon_1\hat{\mu}\btr^1+\epsilon_2\hat{\nu}\btr^2+\epsilon_1\epsilon_2\hat{\mu}\hat{\nu}\btr,\qquad
                \hat{\mu}=\sqrt{\frac{\alpha}{2p^2}},\quad\hat{\nu}=\sqrt{\frac{\beta}{2q^2}},
            \ela
     where $\epsilon_i=\pm1$. These are distinct if $\alpha,\beta \neq 0$.
    \end{theorem}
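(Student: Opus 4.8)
The plan is to recast the problem in the Wilczynski frame and impose tangency there, where it becomes almost entirely algebraic.

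First I would differentiate the definitions \eqref{C2} and eliminate every derivative of $\r$ by means of the projective Gauss--Weingarten equations, so as to obtain the structure equations of the frame $\{\btr,\btr^1,\btr^2,\btr^{12}\}$. These turn out to be sparse; the entries I shall actually use are
\bela{struct}
\begin{aligned}
\btr_x &= \tfrac{q_x}{2q}\btr+\btr^1, & \btr^1_x &= \tfrac{\beta}{2q^2}\btr-\tfrac{q_x}{2q}\btr^1+p\btr^2,\\
\btr^2_x &= -\tfrac{\mathcal{A}}{2p}\btr+\tfrac{q_x}{2q}\btr^2+\btr^{12}, & \btr^{12}_x &= \tfrac{\alpha}{2p}\btr-\tfrac{\mathcal{A}}{2p}\btr^1+\tfrac{\beta}{2q^2}\btr^2-\tfrac{q_x}{2q}\btr^{12},
\end{aligned}
\ela
together with their $y$-analogues, which follow from the substitution $x\leftrightarrow y$, $p\leftrightarrow q$, $\alpha\leftrightarrow\beta$, $\mathcal{A}\leftrightarrow\mathcal{B}$, $\btr^1\leftrightarrow\btr^2$. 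All of these follow by direct computation --- the coefficients involving $\mathcal{A}$ already requiring \eqref{gmc}$_1$ --- the one genuinely delicate entry being the $\btr$-coefficient of $\btr^{12}_x$, for which I would compute $\r_{xxy}$ and reduce the resulting third-order expression; it is here that \eqref{gmc}$_{4,5}$ enter to collapse it to $\tfrac{\alpha}{2p}$. I expect this reduction to be the main obstacle; the rest is bookkeeping.

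Next I would express the tangency condition intrinsically. A point of the Lie quadric is $\bomega=\btr^{12}+\mu\btr^1+\nu\btr^2+\mu\nu\btr$, whose two generator directions are exactly $\partial_\mu\bomega=\btr^1+\nu\btr$ and $\partial_\nu\bomega=\btr^2+\mu\btr$; hence the quadric's tangent plane is $T_{\bomega}Q=\ospan\{\bomega,\partial_\mu\bomega,\partial_\nu\bomega\}$. A short computation gives its annihilator: a vector $v_0\btr+v_1\btr^1+v_2\btr^2+v_3\btr^{12}$ lies in $T_{\bomega}Q$ if and only if
\beq
v_0-\nu v_1-\mu v_2+\mu\nu v_3=0 .
\ee
Regarding $\mu,\nu$ as unknown functions of $(x,y)$, the condition that $\Omega$ envelope $\{Q\}$ is then $\bomega_x,\bomega_y\in T_{\bomega}Q$.

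The decisive simplification is that in $\bomega_x=\bigl(\btr^{12}_x+\mu\btr^1_x+\nu\btr^2_x+\mu\nu\btr_x\bigr)+\mu_x\,\partial_\mu\bomega+\nu_x\,\partial_\nu\bomega$ the last two terms already lie in $T_{\bomega}Q$ and are annihilated, so all derivatives of $\mu,\nu$ drop out and no integrability condition arises. Substituting \eqref{struct} into the frame-derivative part and applying the functional above, the terms linear in $\mu$ and in $\nu$ cancel, the $\mu\nu$-cross terms cancel, and one is left with the purely algebraic relation $\tfrac{\alpha}{2p}-p\mu^2=0$. By the $x\leftrightarrow y$ symmetry the condition from $\bomega_y$ reads $\tfrac{\beta}{2q}-q\nu^2=0$. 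Hence $\mu^2=\tfrac{\alpha}{2p^2}$ and $\nu^2=\tfrac{\beta}{2q^2}$, that is $\mu=\epsilon_1\hat\mu$, $\nu=\epsilon_2\hat\nu$ with $\hat\mu,\hat\nu$ as in \eqref{envelope}. The solutions are real exactly when $\alpha,\beta\geq0$, and the four sign choices $(\epsilon_1,\epsilon_2)\in\{\pm1\}^2$ give the four envelopes \eqref{envelope}; each has $\btr^{12}$-component $1$, whereas $\Sigma=\btr$ does not, so all four are genuinely additional to $\Sigma$. If moreover $\alpha,\beta\neq0$ then $\hat\mu,\hat\nu\neq0$, so the four pairs $(\epsilon_1\hat\mu,\epsilon_2\hat\nu)$ are distinct points of each quadric and the corresponding envelopes are distinct, as claimed.
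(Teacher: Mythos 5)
Your proof is correct, and I could verify each of its steps; note, however, that there is no proof in the paper to compare it against: Theorem \ref{envel} is stated there purely as a recalled classical fact, with a citation to Bol \cite{Bol1954}, so your computation serves as a self-contained substitute rather than a variant of the authors' argument. Concretely, your structure equations for the Wilczynski frame check out — the $\btr$-coefficient of $\btr^{12}_x$ does collapse to $\alpha/(2p)$, with \eqref{gmc}$_1$ entering through the terms involving $\mathcal{A}$ and \eqref{gmc}$_{4,5}$ reducing the third-order expression coming from $\r_{xxy}$ — and the rest is sound: the annihilator of the tangent plane is $v_0-\nu v_1-\mu v_2+\mu\nu v_3$, the $\mu_x,\nu_x$ contributions are annihilated because $\partial_\mu\bomega,\partial_\nu\bomega$ are tangent, and applying the functional to the frame-derivative part of $\bomega_x$ leaves exactly $\alpha/(2p)-p\mu^2$ after the cancellations you describe (similarly $\beta/(2q)-q\nu^2$ from $\bomega_y$). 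Since every step is an equivalence, you actually prove slightly more than is asked: the four surfaces \eqref{envelope} are envelopes, and they are the only ones avoiding the two generators of $Q$ through $\btr$ (points with $\mu$ or $\nu$ infinite escape your normalisation of the $\btr^{12}$-coefficient to $1$, but that uniqueness claim is not part of the statement). It is also worth observing that your argument is precisely the continuous counterpart of what the authors do in the discrete setting of Section 5: there, too, the tangency conditions \eqref{tan_cond} lose all dependence on the increments of the parameters and reduce to the algebraic relations \eqref{munu1}, \eqref{munu2}, whose solutions play the role of your $\pm\hat\mu$, $\pm\hat\nu$. So your route is both valid and the one most consonant with the paper's own methodology.
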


The above expressions for $\hat{\mu}$
and $\hat{\nu}$ reveal that whether $\alpha$ and $\beta$ vanish or
not is related to the number of distinct envelopes. Accordingly, the geometric
interpretation of the algebraic classification recalled in the preceding is as follows. A projective minimal
surface $\Sigma$ is
    \begin{itemize}
        \item[(i)]
            generic if the set of Lie quadrics $\{Q\}$ has four
            distinct additional envelopes,\vspace{0.5mm}
        \item[(ii)]
            of Godeaux-Rozet type if $\{Q\}$ has two distinct additional
            envelopes,\vspace{0.5mm}
        \item[(iii)]
            of Demoulin type if $\{Q\}$ has one additional
            envelope.
    \end{itemize}

We now state a classical theorem which lies at the heart of the geometric definition and analysis of discrete projective minimal surfaces. In order to do so, we adopt a definition proposed in \cite{McCarthySchief2017}.

\begin{definition}
  A surface is termed a {\em PMQ surface} if its asymptotic lines correspond to the asymptotic lines on at least one associated envelope.
\end{definition}

\begin{theorem}
  The class of PMQ surfaces consists of projective minimal (PM) and Q surfaces.
\end{theorem}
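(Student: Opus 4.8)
The plan is to translate the statement into the sparse linear system satisfied by the Wilczynski frame \eqref{C2} and to show that the condition ``coordinate lines are asymptotic on the envelope'' factorises into the projective-minimal condition and the Q-surface condition. First I would differentiate \eqref{C2}, eliminate the $\r$-derivatives using the projective Gauss--Weingarten equations, and re-express the derivatives of $\{\btr,\btr^1,\btr^2,\btr^{12}\}$ in the frame itself. Simplifying throughout with \eqref{gmc}, these equations turn out to be sparse; for instance
\[
\btr^1_x=\frac{\beta}{2q^2}\btr-\frac12\frac{q_x}{q}\btr^1+p\btr^2,\qquad
\btr^{12}_x=\frac{\alpha}{2p}\btr-\frac{\mathcal A}{2p}\btr^1+\frac{\beta}{2q^2}\btr^2-\frac12\frac{q_x}{q}\btr^{12},
\]
the $y$-equations following by the symmetry $x\leftrightarrow y$, $p\leftrightarrow q$, $\alpha\leftrightarrow\beta$, $\mathcal A\leftrightarrow\mathcal B$, $\btr^1\leftrightarrow\btr^2$.

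Next I would write an envelope as $\bomega=\btr^{12}+\mu\btr^1+\nu\btr^2+\mu\nu\btr$ with $\mu=\mu(x,y)$, $\nu=\nu(x,y)$. The tangent plane of the Lie quadric at $\bomega$ is $T_{\bomega}Q=\ospan\{\bomega,\btr^1+\nu\btr,\btr^2+\mu\btr\}$, i.e.\ the kernel of the linear form $L$ acting on $v=v_0\btr+v_1\btr^1+v_2\btr^2+v_{12}\btr^{12}$ by $L[v]=v_0-\nu v_1-\mu v_2+\mu\nu v_{12}$. Imposing the envelope conditions $L[\bomega_x]=L[\bomega_y]=0$ and substituting the sparse frame equations, the derivative terms in $\mu,\nu$ cancel and one is left with the purely algebraic relations $\mu^2=\alpha/(2p^2)$, $\nu^2=\beta/(2q^2)$; this reproduces \eqref{envelope} and identifies $T=\ospan\{\bomega,\bomega_x,\bomega_y\}$ with $T_{\bomega}Q=\ker L$.

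The asymptotic lines of $\Sigma$ correspond to asymptotic lines on $\Omega$ precisely when the coordinate lines are asymptotic on $\Omega$, that is $\bomega_{xx},\bomega_{yy}\in\ker L$. Since $L[\bomega_x]\equiv0$, differentiation gives $L[\bomega_{xx}]=-\tfrac{d}{dx}L\,[\bomega_x]$, where $\tfrac{d}{dx}L$ is the covector derivative obtained from the (transpose of the) sparse frame matrix. Carrying this out and simplifying with \eqref{gmc}, I expect the $x$-condition to collapse to the factored form
\[
\left(\frac{\mathcal A}{p}-2\mu_x\right)\!\left(\nu_x+\nu\,\frac{q_x}{q}\right)=0,
\]
and, by the above symmetry, the $y$-condition to $\big(\tfrac{\mathcal B}{q}-2\nu_y\big)\big(\mu_y+\mu\,\tfrac{p_y}{p}\big)=0$. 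On the envelope $\nu_x+\nu\,q_x/q=\nu\beta_x/(2\beta)$ and $\mu_y+\mu\,p_y/p=\mu\alpha_y/(2\alpha)$, so each condition splits into a factor proportional to $\beta_x$ (resp.\ $\alpha_y$) and a second factor.

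Finally I would read off the dichotomy. By \eqref{gmc}$_3$ the vanishing of the first factors is the single projective-minimal condition $\alpha_y=\beta_x=0$ of Theorem~\ref{elcond}, giving the PM surfaces (the degenerate cases $\mu=0$ or $\nu=0$ force $\alpha=0$ or $\beta=0$ and are themselves projective minimal); vanishing of the second factors, $\mathcal A/p=2\mu_x$ and $\mathcal B/q=2\nu_y$ for a suitable choice of the signs $\epsilon_1,\epsilon_2$ when $\alpha,\beta\neq0$, is the defining relation of Q surfaces. Hence the asymptotic correspondence holds for at least one envelope if and only if $\Sigma$ is projective minimal or a Q surface, which is the assertion. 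The main obstacle is the third step: the transversality computation must keep both the $\mu,\nu$-dependence of $L$ and the contribution of the dual frame (a sign I would check carefully), since it is precisely the interference of these terms that produces the clean factorisation; a secondary point is to match the second factor exactly with the characterisation of Q surfaces in \cite{Bol1954}.
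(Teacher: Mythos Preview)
The paper does not actually prove this theorem; immediately after its statement the authors write that it ``may be found in \cite{Bol1954,Sasaki2005}'' and use it only as classical background motivating the discrete theory. So there is no proof in the paper to compare your attempt against.

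That said, your outline is the standard classical approach and is sound in spirit. The sparse Wilczynski frame equations you quote are correct (your formula for $\btr^1_x$ checks out against the Gauss--Weingarten equations and \eqref{C2}), the envelope relations $\mu^2=\alpha/(2p^2)$, $\nu^2=\beta/(2q^2)$ are exactly those of Theorem~\ref{envel}, and evaluating $L[\bomega_{xx}]$ via the dual frame is the right mechanism. Two points need to be completed rather than asserted. First, your ``expected'' factorisation $(\mathcal A/p-2\mu_x)(\nu_x+\nu\,q_x/q)=0$ is stated without derivation; the asymptotic-correspondence condition does factor in the classical sources, but you should carry the computation through, since the interference of the $\mu,\nu$-terms with the dual-frame contribution is precisely where errors creep in (as you yourself note). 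Second, the identification of the non-PM factor with the Q-surface condition is the step you flag as uncertain, and rightly so: Q surfaces are defined in \cite{Bol1954} by a specific relation among the projective invariants (geometrically, the envelope degenerates to a fixed quadric), and you must verify that your relations $\mathcal A/p=2\mu_x$, $\mathcal B/q=2\nu_y$ are equivalent to that characterisation rather than merely plausible candidates. If you close those two gaps, your argument would be a complete proof along the lines of the cited references.
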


The above key theorem may be found in \cite{Bol1954,Sasaki2005}. For the definition of {\em Q surfaces}, we refer to the monograph \cite{Bol1954} or \cite{McCarthySchief2018}. However, the discrete analogue of Q surfaces is defined in Section 5(a).

\section{Discrete surfaces in \mbox{\boldmath $\P^3$}}

We are now concerned with discrete surfaces $\Sigma$ in a real projective space $\P^3$, that is, lattices of $\Z^2$ combinatorics in $\P^3$ which are represented by homogeneous coordinates $\r :\Z^2\rightarrow\R^4$. If we indicate increments and decrements of the discrete independent variables $n_k$, $k=1,2$ by subscripts $k$ and $\bar{k}$ respectively then any quadrilateral of a discrete surface $\Sigma$ is denoted by $[\r,\r_1,\r_2,\r_{12}]$, while the 5 vertices of any star are given by the central vertex $\r$ and its nearest neighbours $\r_1,\r_2$ and $\r_{\bar{1}},\r_{\bar{2}}$. Here, and in the following, we suppress the arguments in $\r(n_1,n_2)$ and simply write $\r$, provided that this does not give rise to ambiguity. Furthermore, we focus on the standard (integrable) discretisation of (hyperbolic) surfaces parametrised in terms of asymptotic coordinates \cite{BobenkoSuris2008}.

\begin{definition}
A discrete surface $\Sigma$ in $\P^3$ represented by a map $\r:\Z^2\rightarrow\R^4$ is termed a {\em discrete asymptotic net} if the stars of $\Sigma$ are planar, that is, if any vertex $\r$ and its four nearest neighbours $\r_1,\r_2, \r_{\bar{1}},\r_{\bar{2}}$ (regarded as points in $\P^3$) are coplanar.
\end{definition}

It is observed that, in algebraic terms, the conditions for a surface to constitute a discrete asymptotic net may be formulated as 
%
  $|\r,\r_1,\r_{11},\r_{12}| = 0$ and $|\r,\r_2,\r_{22},\r_{12}| = 0$.
%
In the following, we also assume that any discrete asymptotic net is generic in the sense that its quadrilaterals are non-planar. In this case, it is easy to verify \cite{HuhnenVenedeyRoerig2014} that any quadrilateral gives rise to a one-parameter family (pencil) of quadrics which pass through its edges. We say that any quadric of this family is {\em associated} with the quadrilateral. Accordingly, the following definition proposed in \cite{McCarthySchief2017,McCarthySchief2018} is natural.

\begin{definition}
Any two quadrics associated with two neighbouring quadrilaterals of a discrete asymptotic net have the {\em $\C^1$ property} if the tangent planes of the two quadrics coincide at each point of the common edge of the quadrilaterals. A {\em lattice of Lie quadrics} $\{Q\}$ is a set of quadrics associated with the quadrilaterals of a discrete asymptotic net such that any two neighbouring quadrics have the $\C^1$ property.
\end{definition}

It turns out that any discrete asymptotic net gives rise to a one-parameter family of lattices of Lie quadrics \cite{McCarthySchief2017,McCarthySchief2018}. Thus, if $Q$ is a quadric associated with a quadrilateral $\Box = [\r,\r_1,\r_2,\r_{12}]$ then the $\C^1$ condition uniquely determines quadrics $Q_1$ and $Q_2$ associated with the quadrilaterals $\Box_1$ and $\Box_2$ respectively. Accordingly, there exist two quadrics $Q_{12}$ and $Q_{21}$ associated with the quadrilateral $\Box_{12}$ which are uniquely determined by the $\C^1$ condition with respect to the quadrics $Q_1$ and $Q_2$. Remarkably, the two quadrics $Q_{12}$ and $Q_{21}$ coincide \cite{HuhnenVenedeyRoerig2014}. This is summarised in the following theorem.

\begin{theorem}
A lattice of Lie quadrics associated with a discrete asymptotic net is uniquely determined by prescribing the quadric associated with one quadrilateral.
\end{theorem}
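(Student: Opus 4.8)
The plan is to establish the result by a propagation argument: the prescribed quadric is transported across the lattice by repeated application of the $\C^1$ condition, and the fact that this transport closes up consistently around every elementary square guarantees that the resulting lattice of Lie quadrics exists and is unique. In effect I would reduce the global statement to the two pieces of local data already recorded above, namely the single-valuedness of the $\C^1$ matching and the coincidence $Q_{12}=Q_{21}$.

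First I would make precise the elementary step. The $\C^1$ property is a symmetric relation between a quadric of the pencil associated with one quadrilateral and a quadric of the pencil associated with a neighbouring one. Applying the single-valuedness recalled above to each quadrilateral of such a pair in turn shows that this relation is single-valued in both directions and hence a bijection between the two pencils. Consequently, a quadric prescribed on any quadrilateral determines, and is determined by, a unique $\C^1$-compatible quadric on each of its four neighbours $\Box_1,\Box_2,\Box_{\bar{1}},\Box_{\bar{2}}$, so that the transport is invertible and may be carried out in all four lattice directions.

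Next I would prove uniqueness. Suppose two lattices of Lie quadrics agree with the prescribed quadric on the initial quadrilateral $\Box_0$. Any quadrilateral of $\Sigma$ is joined to $\Box_0$ by a finite lattice path, and along each edge of this path both lattices are related to their predecessors by the same bijective $\C^1$ transport. Propagating the agreement edge by edge, the two lattices coincide at the endpoint, and hence everywhere. This already shows that at most one lattice of Lie quadrics can extend the prescribed quadric.

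For existence I would define the quadric on an arbitrary quadrilateral by transporting the prescribed quadric along some lattice path from $\Box_0$ and then verify that the value is independent of the chosen path. Since $\Z^2$ is simply connected, any two such paths differ by a composition of elementary quadrilateral cycles, so path-independence follows once the transport closes up around a single elementary square; this is exactly the content of the coincidence $Q_{12}=Q_{21}$ recorded above. The assignment so obtained satisfies the $\C^1$ property across every edge by construction, and therefore constitutes the desired lattice. The only genuinely nontrivial ingredient is the square consistency $Q_{12}=Q_{21}$; granting it, the surrounding argument is the standard reduction of path-independence to closedness around elementary cells on a simply connected lattice, and I anticipate no further obstacle.
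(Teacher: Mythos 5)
Your proposal is correct and takes essentially the same route as the paper: the paper's own argument (the paragraph preceding the theorem) consists precisely of the two ingredients you isolate, namely the unique $\C^1$ propagation of a quadric to neighbouring quadrilaterals and the coincidence $Q_{12}=Q_{21}$ cited from Huhnen-Venedey and R\"orig. Your only addition is to spell out the standard path-independence bookkeeping (invertibility of the transport plus simple-connectedness of $\Z^2$), which the paper leaves implicit.
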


\subsection{A canonical frame}

In order to derive a compact form of Gauss-Weingarten-type equations for discrete asymptotic nets $\Sigma$, we introduce the frame
\bela{E2}
 \F = 
   \left(\begin{array}{l}
             \br\\ \br^1\\ \br^2\\ \br^{12}
    \end{array}
   \right) = 
   \left(\begin{array}{c}
             \br\\ \alpha\br_1\\ \alb\br_2\\ \gamma\br_{12} 
    \end{array}
   \right),
\ela 
wherein the functions $\alpha,\alb$ and $\gamma$ are to be determined. The planarity of stars characterising discrete asymptotic nets then implies that neighbouring frames are related by a linear system of the form
\bela{F1_1}
   \left(\begin{array}{l}
             \br\\ \br^1\\ \br^2\\ \br^{12} 
    \end{array}
    \right)_1 = 
    \left(\bear{cccc}
      0   &  1/\alpha  &  0      &  0  \\
    \beta &      u     &  0      &  b  \\  
      0   &      0     &  0      &  \alb_1/\gamma\\
      0   &      a     & \delta  &  v  \\
   \ear\right)
   \left(\begin{array}{l}
             \br\\ \br^1\\ \br^2\\ \br^{12} 
    \end{array}
    \right),
    \quad
    \left(\begin{array}{l}
             \br\\ \br^1\\ \br^2\\ \br^{12} 
    \end{array}
    \right)_2 = 
    \left(\bear{cccc}
         0   &      0         &   1/\alb  &    0       \\
         0   &      0         &     0    &  \alpha_2/\gamma\\[1mm]
   \bar{\beta} &      0         &    \ub   &  \bb  \\  
         0   &  \bar{\delta}  &    \ab   &  \vb   
   \ear\right)
   \left(\begin{array}{l}
             \br\\ \br^1\\ \br^2\\ \br^{12} 
    \end{array}
    \right) .   
\ela
These frame equations for discrete asymptotic nets may be simplified by appropriately choosing the homogeneous coordinates. Thus, we first scale the frame vectors in such a way that the determinant of $\F$ is constant. Accordingly, the determinants of the
matrices in \eqref{F1_1} are unity so that
\bela{det_conds}
 \frac{\alb_1\beta\delta}{\alpha\gamma}=1, \quad \frac{\alpha_2\bar{\beta}\bar{\delta}}{\alb\gamma} =1.
\ela

The next simplification is related to the fact that the one-parameter ($p$) family of quadrics associated with any quadrilateral $[\r,\r_1,\r_2,\r_{12}]$ may be represented by
\bela{E4}
  Q = p\r^{12} + \mu\r^1 + \nu\r^2 + \mu\nu\r,
\ela
where $\mu$ and $\nu$ parametrise any quadric $Q$ in this family for fixed $p$.  For brevity, here and in the following, we use the same symbol $Q$ for a quadric and its representation in terms of homogeneous coordinates. It is noted that the coordinate lines $\mu=\mbox{const}$ and $\nu=\mbox{const}$ make up the two families of generators of $Q$. It is also emphasised that, for any scalar $\varkappa$,  $Q$ and $\varkappa Q$ represent the same quadric. For any given lattice of Lie quadrics $\{Q\}$, we may always scale the frame vectors in such a manner that $p=1$. Hence, for instance, the quadrics $Q$ and $Q_1$ of any neighbouring quadrilaterals $\Box = [\r,\r_1,\r_2,\r_{12}]$ and $\Box_1$ are parametrised by
\bela{E5}
  Q = \br^{12} + \mu \br^1 + \nu \br^2 + \mu\nu\br,\quad
  Q_1 = \br^{12}_1 + \mu_1 \br^1_1 + \nu_1 \br^2_1 + \mu_1\nu_1\br_1
\ela
which shows that the common edge $[\r_1,\r_{12}]$ of the quadrilaterals $\Box$ and $\Box_1$ is represented by \mbox{$\nu=0$} and $\nu_1=\infty$ respectively. Thus, the generators $\mu=\mbox{const}$ and $\mu_1=\mbox{const}$ of $Q$ and $Q_1$ respectively meet at the point
\bela{E6}
  P \sim \r^{12} + \mu\r^1 \sim \r^2_1 + \mu_1\r_1 = \frac{\alb_1}{\gamma}\left(\br^{12}+\frac{\gamma}{\alpha\alb_{1}}\mu_1\br^1\right)
\ela
by virtue of the frame equations \eqref{F1_1} so that the labels $\mu$ and $\mu_1$ of two generators which meet at the common edge are related by
\begin{equation}
   \mu =  \frac{\gamma}{\alpha\alb_{1}}\mu_1.   \label{mumu1}
 \end{equation}  
Furthermore, the $\C^1$ property at any point $P$ on the common edge may be formulated as
 \begin{equation}
   \left| \frac{\partial}{\partial\nu}{Q|}_P,\,\br^1,\, \br^{12},\, \frac{\partial}{\partial\hat{\nu}_1} {\hat{Q}_1|}_P\right| = 0,   \label{C1cond}
 \end{equation}
where $\hat{Q}_1=\hat{\nu}_1 Q_1$ and $\hat{\nu}_1=1/\nu_1$, leading to the relation 
\bela{E7}
 \mu_1\beta=\mu\delta.
\ela
Similarly, consideration of any two neighbouring quadrilaterals $\Box$ and $\Box_2$ results in the analogous relations
\bela{E8}
  \nu =  \frac{\gamma}{\alb\alpha_{2}}\nu_2,\quad \nu_2\bar{\beta}=\nu\bar{\delta}.
\ela
Elimination of $\mu,\mu_1$ and $\nu,\nu_2$ from \eqref{mumu1}, \eqref{E7} and \eqref{E8} respectively therefore produces
 \begin{equation}
  \delta=\frac{\alpha\bar{\alpha}_1}{\gamma} \beta, \quad \bar{\delta}=\frac{\bar{\alpha}\alpha_2}{\gamma} \bar{\beta}.      \label{deltas}
 \end{equation}

Finally, the remaining two degrees of freedom in the choice of homogeneous coordinates may be exploited to guarantee that the labels of any two generators of neighbouring quadrics meeting at the common edge of the two corresponding quadrilaterals are the same, that is,
\begin{equation}
    \mu_1=\mu, \quad \nu_2=\nu.   \label{munu_conds}
 \end{equation}
Comparison with \eqref{mumu1} and \eqref{E8}$_1$ then shows that
\begin{equation}
   \alb_1=\frac{\gamma}{\al}, \quad \alpha_2=\frac{\gamma}{\alb}.  \label{alphas}
\end{equation}
The latter may be regarded as a definition of $\gamma$ together with the constraint
\bela{E9}
   \al\alb_1 = \alb\al_2.
\end{equation}
In Section 3(b), it is demonstrated that this constraint is implied by the compatibility condition for the frame equations \eqref{F1_1} so that it may be set aside in the current context. We now observe that the relations \eqref{deltas} reduce to 
 \begin{equation}\label{E10}
  \delta = \beta, \quad \bar{\delta} = \bar{\beta}
 \end{equation}
so that the determinant conditions \eqref{det_conds} simplify to $\beta^2=\al^2$ and $\bar{\beta}^2=\alb^2$. Without loss of generality, we may therefore set
\begin{equation}\label{E11}
    \beta = -\al, \quad \bar{\beta} = -\alb.
\end{equation} 
The preceding analysis is summarised in the following theorem.

\begin{theorem}
  A discrete asymptotic net $\Sigma$ in $\P^3$ admits a {\em canonical frame} $\F$ given by \eqref{E2}, wherein $\gamma$ is defined by either of the relations \eqref{alphas}, such that the {\em discrete projective Gauss-Weingarten equations} adopt the form 
\bela{Feq1}
 \bear{rcl}
   \F_1&=&\left(\begin{array}{l}
             \br\\ \br^1\\ \br^2\\ \br^{12} 
    \end{array}
    \right)_1 = 
    \left(\bear{cccc}
      0   &  1/\alpha  &  0      &  0  \\
    -\al  &      u     &  0      &  b  \\  
      0   &      0     &  0      &  1/\al\\
      0   &      a     & -\al    &  v  \\
   \ear\right)
\left(\begin{array}{l}
             \br\\ \br^1\\ \br^2\\ \br^{12} 
    \end{array}
    \right)=L \F \\[10mm]
  \F_2&=&\left(\begin{array}{l}
             \br\\ \br^1\\ \br^2\\ \br^{12} 
    \end{array}
    \right)_2 = 
    \left(\bear{cccc}
         0   &      0         &   1/\alb  &    0       \\
         0   &      0         &      0    &  1/\alb\\[1mm]
      -\alb  &      0         &     \ub   &  \bb  \\  
         0   &    -\alb       &     \ab   &  \vb   
   \ear\right)
\left(\begin{array}{l}
             \br\\ \br^1\\ \br^2\\ \br^{12} 
    \end{array}
    \right)=M \F
  \ear
\ela
and the quadrics
\bela{E12}
  Q = \br^{12} + \mu \br^1 + \nu \br^2 + \mu\nu\br
\ela
constitute a lattice of Lie quadrics $\{Q\}$ associated with $\Sigma$.
\end{theorem}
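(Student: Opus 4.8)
The plan is to treat the theorem as the terminal step of the normalisation procedure: starting from the general lift \eqref{E2} with the three undetermined functions $\al$, $\alb$, $\gamma$ together with the scaling freedom of the homogeneous coordinates $\r$, I would successively fix this freedom until the shift matrices collapse to those in \eqref{Feq1}, with the lattice \eqref{E12} emerging along the way. The first task is to establish the sparse form \eqref{F1_1}. The rows returning $\br_1$ and $\br^2_1$ in the first system, and $\br_2$ and $\br^1_2$ in the second, are forced by the very definition \eqref{E2}; the remaining zero entries are precisely the coplanarity conditions defining a discrete asymptotic net. The vanishing $\br^2$-component of $\br^1_1$ encodes $\r_{11}\in\ospan(\r,\r_1,\r_{12})$, while the vanishing $\br$-component of $\br^{12}_1=\gamma_1\r_{112}$ follows from the planarity of the star at $\r_{12}$, since $\r_{112},\r_1,\r_2,\r_{12}$ then lie in a common plane of which $\r_1,\r_2,\r_{12}$ form a basis (and analogously for $M$).

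With \eqref{F1_1} in hand I would exhaust the gauge freedom in stages. Scaling the lift so that $\det\F$ is constant makes both shift matrices unimodular and yields the determinant conditions \eqref{det_conds}. Next, normalising the pencil of associated quadrics \eqref{E4} to $p=1$ fixes the relative scale of $\br^{12}$ against the tangential frame vectors and gives the parametrisation \eqref{E5}; tracing the generators that meet along the common edge of neighbouring quadrilaterals produces \eqref{E6} and the label relation \eqref{mumu1}, while the $\C^1$ property written in the determinantal form \eqref{C1cond} gives \eqref{E7}. The same reasoning in the second lattice direction yields \eqref{E8}, and eliminating the generator labels from \eqref{mumu1}, \eqref{E7} and \eqref{E8} produces \eqref{deltas}.

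In the final stage I would spend the remaining two degrees of freedom to align the labels of generators meeting along each common edge, i.e.\ to impose \eqref{munu_conds}; comparison with \eqref{mumu1} and \eqref{E8} then defines $\gamma$ through either relation in \eqref{alphas}. Substituting \eqref{alphas} into \eqref{deltas} collapses the latter to \eqref{E10}, whereupon \eqref{det_conds} reduces to $\beta^2=\al^2$ and $\bar{\beta}^2=\alb^2$, and the sign normalisation \eqref{E11} is available without loss of generality. Inserting \eqref{E10} and \eqref{E11} into \eqref{F1_1} reproduces precisely the matrices $L$ and $M$ of \eqref{Feq1}. Finally, the quadrics \eqref{E12} are exactly the quadrics \eqref{E5} under the normalisation $\mu_1=\mu$, $\nu_2=\nu$, so that neighbouring members have the $\C^1$ property by construction; invoking the earlier theorem that a single associated quadric determines a consistent lattice confirms that \eqref{E12} is indeed a lattice of Lie quadrics associated with $\Sigma$.

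The step I expect to be the main obstacle is the bookkeeping of the gauge freedom and the consistency constraint it generates. Reading \eqref{alphas} as a definition of $\gamma$ presupposes that $\alb_1=\gamma/\al$ and $\alpha_2=\gamma/\alb$ can hold simultaneously, which forces $\al\alb_1=\alb\al_2$, that is, \eqref{E9}. This relation is not delivered by the normalisations themselves, so the honest point is to show that it follows from the compatibility condition $L_2 M = M_1 L$ of the frame equations; I would defer this verification to the analysis of the Gauss-Mainardi-Codazzi equations (as in Section 3(b)) and here only confirm that, granted \eqref{E9}, the whole chain of normalisations is mutually consistent and exhausts the available freedom exactly, so that \eqref{Feq1} is canonical.
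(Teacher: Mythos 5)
Your proposal is correct and takes essentially the same approach as the paper: the theorem there is stated as a summary of exactly this normalisation chain --- sparse form \eqref{F1_1} from planarity of stars, unimodularity giving \eqref{det_conds}, the $p=1$ quadric normalisation and $\C^1$ analysis giving \eqref{deltas}, the label alignment \eqref{munu_conds} giving \eqref{alphas}, \eqref{E10}, \eqref{E11} --- with the constraint \eqref{E9} likewise deferred to the compatibility analysis of Section 3(b). Your treatment of the zero entries in \eqref{F1_1} and of the lattice property of \eqref{E12} matches the paper's reasoning.
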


In the following, we assume that no triple of consecutive vertices of any coordinate polygon are collinear. This is equivalent to demanding that the functions $a,\ab$ and $b,\bb$ do not vanish and may be regarded as a discrete analogue of the exclusion of ruled surfaces.

\subsection{Discrete Gauss-Mainardi-Codazzi equations}

The compatibility condition $\F_{12}=\F_{21}$ for the discrete Gauss-Weingarten equations \eqref{Feq1} leads to the nonlinear system of difference equations $L_2M = M_1L$. These assume a compact form if one adopts the change of dependent variables
\bela{FFggb}
 f = \frac{u+v}{2},\quad\f = \frac{\ub+\vb}{2},\quad
 g = \frac{u-v}{2},\quad\g = \frac{\ub-\vb}{2}   
\ela
together with the definitions
\bela{E14}
 T = ab + g^2, \quad\Tb = \ab\bb + \g^2.
\ela
The following theorem is then directly verified.

\begin{theorem}
The {\em discrete projective Gauss-Mainardi-Codazzi equations} associated with the Gauss-Weingarten equations \eqref{Feq1} for a discrete asymptotic net are given by
\bela{E15}
\begin{aligned}
     \al_2 &= w\al, \val       &                    
    wf_2 &=  f - \frac{a}{\alb} \g , \val    &     
    wg_2 &= -g + \frac{a}{\alb} \f, \val    &     
     w b_2 &= -\frac{a}{\alb^2}\\   
     \alb_1 &=  w\alb,\val &
     w\f_1 &=  \f - \frac{\ab}{\al} g,\val &
     w\g_1 &= -\g + \frac{\ab}{\al} f,\val &
     w \bb_1 &= -\frac{\ab}{\al^2}
 \end{aligned} 
\ela
together with
\begin{equation}
   \ab \alb \Delta_2T = a \al \Delta_1\Tb,  \label{Teqn}
\end{equation}
where $\Delta_i h = h_i - h$ for any function $h$ and $w$ is defined by 
\begin{equation}
    \al\alb(w^2-1)+a\ab=0   \label{weqn}.
 \end{equation}
The constraint \eqref{E9} is implied by \eqref{E15}$_{1,5}$.
\end{theorem}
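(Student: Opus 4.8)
The plan is to prove the theorem by direct verification of the compatibility condition. Applied to the Gauss-Weingarten equations \eqref{Feq1}, the requirement $\F_{12}=\F_{21}$ reads $L_2M=M_1L$, so I would form both $4\times4$ products $L_2M$ and $M_1L$ (recalling that a subscript $i$ shifts every coefficient in the $n_i$-direction) and equate them entry by entry. This produces sixteen scalar equations, of which several collapse to the trivial identity $0=0$ because of the many zero entries in $L$ and $M$; the genuine content resides in a small number of positions.

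First I would extract the positions that are linear in $\al,\alb$ alone. The $(4,1)$ and $(1,4)$ entries both yield $\al_2\alb=\al\alb_1$, i.e.\ the constraint \eqref{E9}; reading this as the assertion that the two ratios $\al_2/\al$ and $\alb_1/\alb$ agree, I define their common value to be $w$, which is precisely \eqref{E15}$_{1,5}$. The $(2,2)$ and $(3,3)$ entries then give $b_2=-a/(\alb\alb_1)$ and $\bb_1=-\ab/(\al\al_2)$, which upon substituting $\alb_1=w\alb$ and $\al_2=w\al$ become \eqref{E15}$_{4,8}$. Feeding these $b$- and $\al$-relations into the off-diagonal entries $(2,3)$ and $(3,2)$ collapses everything to $\al\alb(w^2-1)+a\ab=0$, the defining relation \eqref{weqn}; conversely, once \eqref{weqn} is adopted as the definition of $w$, these entries are automatically consistent.

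Next I would treat the entries carrying the coefficients $u,v,\ub,\vb$, namely $(2,4),(3,4),(4,2),(4,3)$. Each single entry mixes shifted and unshifted data, but after passing to the variables \eqref{FFggb} the combinations $u=f+g$, $v=f-g$, $\ub=\f+\g$, $\vb=\f-\g$ diagonalise the system: appropriate sums and differences isolate $f_2,g_2$ and $\f_1,\g_1$ and reproduce \eqref{E15}$_{2,3,6,7}$, the residual cross-terms being absorbed by \eqref{weqn}. At this point every relation in \eqref{E15} apart from the single equation \eqref{Teqn} has been accounted for, and the constraint \eqref{E9} is manifestly a consequence of \eqref{E15}$_{1,5}$, since both $\al\alb_1$ and $\alb\al_2$ equal $w\al\alb$.

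The remaining, and genuinely laborious, step is the $(4,4)$ entry, which is the only position in which the coefficients $a_2$ and $\ab_1$ occur; this is where the Gauss-type equation \eqref{Teqn} must originate. I would substitute the already-derived expressions for $b_2,\bb_1,\al_2,\alb_1$ and the $f,g$-relations into $T_2=a_2b_2+g_2^2$ and $\Tb_1=\ab_1\bb_1+\g_1^2$ from \eqref{E14}, then eliminate $a_2$ and $\ab_1$ via the $(4,4)$ identity, so that $\ab\alb\,\Delta_2T-a\al\,\Delta_1\Tb$ reduces to an expression in the unshifted fields alone. The main obstacle is exactly the bookkeeping here: one must check that the quadratic remainders in $v_2\vb$, $\vb_1v$, $g_2^2$, $\g_1^2$ together with the $\al_2\bb$ and $\alb_1b$ cross-terms cancel identically once \eqref{weqn} is invoked. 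I expect this cancellation to be the only non-mechanical part of the argument; everything preceding it is a routine, if lengthy, entrywise comparison.
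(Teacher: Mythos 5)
Your proposal is correct and takes essentially the same route as the paper, which likewise establishes the theorem by direct verification of the compatibility condition $L_2M = M_1L$ after passing to the variables \eqref{FFggb} and \eqref{E14}. Your entry-by-entry accounting checks out: $(1,4)$ and $(4,1)$ yield \eqref{E9}, $(2,2)$ and $(3,3)$ yield \eqref{E15}$_{4,8}$, $(2,3)$ and $(3,2)$ collapse to \eqref{weqn}, the four entries $(2,4),(3,4),(4,2),(4,3)$ produce \eqref{E15}$_{2,3,6,7}$ modulo \eqref{weqn}, and the $(4,4)$ entry, after eliminating $a_2/\alb-\ab_1/\al$ and invoking $1-w^2=a\ab/(\al\alb)$, reduces exactly to \eqref{Teqn}.
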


It turns out that the quantities $T$ and $\Tb$ have a distinct geometric meaning. Thus, if two quadrics $Q$ and $Q_1$ associated with two neighbouring quadrilaterals $\Box$ and $\Box_1$ have the $\C^1$ property then we may regard these to be part of the lattice of Lie quadrics $\{Q\}$ to which the canonical frame is adapted. The points which the two quadrics have in common are then determined by $Q\sim Q_1$, where $Q$ and $Q_1$ are given by \eqref{E5}. This leads to the set of equations
\bela{E16}
   \mu  = \mu_1,\quad
   \frac{\nu}{\nu_1}\left(b\mu_1+v\right)+\frac{\nu}{\al}+\frac{\al}{\nu_1} = 0,\quad
   \frac{\nu}{\nu_1}\left(u\mu_1+a\right)+\frac{\nu}{\al}\mu_1+\frac{\al}{\nu_1}\mu = 0. 
\ela
The first equation implies that the points of intersection are points common to generators \mbox{$\mu=\mbox{const}$} and $\mu_1=\mbox{const}$ of $Q$ and $Q_1$ respectively which meet at the common edge of $\Box$ and $\Box_1$. The case $\nu=0$, $\nu_1=\infty$ corresponds to the (extended) edge $[\r_1,\r_{12}]$ which belongs to both quadrics $Q$ and $Q_1$. If $\nu\neq0$ then the remaining two equations reduce to
\bela{E17}
  b\mu^2 -(u-v)\mu-a = 0,\quad
  \nu_1 = -\al\left(\frac{\al}{\nu}+b\mu+v\right).
\ela
Accordingly, in addition to the common edge, the two quadrics intersect in common generators $\mu=\mu_1=\mbox{const}$ determined by the quadratic equation \eqref{E17}$_1$. The nature of its solutions depends on the sign of the discriminant
 \begin{equation}\label{E18}
    4ab + (u-v)^2 = 4T.
 \end{equation}
This is summarised below.

\begin{theorem}
In terms of the adapted canonical frame, any two neighbouring quadrics $Q$ and $Q_1$ of a lattice of Lie quadrics $\{Q\}$ meet in the extended common edge and two additional common generators $\mu=\mu_1=\mbox{const}$ if $T\geq0$ with $4T$ playing the role of the discriminant of the quadratic equation
\bela{E19}
  b\mu^2 - 2g\mu - a = 0.
\ela
If $T=0$ then the two common generators coincide and the quadrics $Q$ and $Q_1$ touch along the common generator. If $T<0$ then the two quadrics only intersect in the extended common edge. The same statements apply {\em mutatis mutandis} to any neighbouring quadrics $Q$ and $Q_2$ with $4\bar{T}$ being the discriminant of the quadratic equation
\bela{E20}
  \bb\nu^2 - 2\g\nu - \ab = 0.
\ela
\end{theorem}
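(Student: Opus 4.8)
The plan is to take as the starting point the intersection analysis already carried out in equations \eqref{E16}--\eqref{E18}, which reduces the condition $Q\sim Q_1$ to the statement that common points lie either on the extended edge $[\r_1,\r_{12}]$ (the case $\nu=0$, $\nu_1=\infty$) or on generators $\mu=\mu_1=\mbox{const}$ whose labels solve the quadratic \eqref{E17}$_1$. First I would rewrite this quadratic in the canonical variables: using $u-v=2g$ from \eqref{FFggb}, the equation $b\mu^2-(u-v)\mu-a=0$ becomes precisely \eqref{E19}, and the quantity \eqref{E18} is $(u-v)^2+4ab=4g^2+4ab=4(g^2+ab)=4T$ by the definition \eqref{E14}. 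This identifies $4T$ as the discriminant of \eqref{E19} and reduces the remaining content of the theorem to a sign analysis, noting that $b\neq0$ (guaranteed by the exclusion of collinear triples) ensures \eqref{E19} is genuinely quadratic.

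Next I would carry out the case distinction on $T$. If $T>0$, \eqref{E19} has two distinct real roots $\mu_\pm$; for each root, \eqref{E17}$_2$ assigns to every $\nu$ a corresponding $\nu_1$, so the entire generator $\mu=\mu_\pm$ of $Q$ coincides as a line with the generator $\mu_1=\mu_\pm$ of $Q_1$, yielding two distinct additional common generators beyond the edge. If $T<0$ there are no real roots, so no generator is shared and the only common locus is the extended edge. If $T=0$ the two roots merge at the double root $\mu_\ast=g/b$, so the two common generators coincide into one.

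The main obstacle is the tangency assertion in the case $T=0$: one must show that the quadrics not merely share the generator $\mu=\mu_\ast$ but touch along it, which is a fact additional to the $\C^1$ contact already holding along the edge. I would establish this by comparing, at an arbitrary point of the repeated generator, the tangent plane of $Q$ --- spanned by $Q$, $\partial_\mu Q$ and $\partial_\nu Q$ --- with that of $Q_1$, and checking that they coincide for all $\nu$ along the generator precisely when the discriminant $4T$ vanishes. Conceptually this is because, for $T>0$, the two common generators are crossed transversally, whereas their coalescence at $T=0$ converts this transversal double intersection into second-order contact; the vanishing discriminant is exactly the analytic signature of the resulting tangency. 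This is the only step requiring genuine (though brief) computation rather than the bookkeeping already supplied by \eqref{E16}--\eqref{E18}.

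Finally, the statement for the pair $Q,Q_2$ follows without further work by the symmetry of the Gauss-Weingarten equations \eqref{Feq1} under interchange of the two lattice directions, which sends the unbarred to the barred quantities and swaps the roles of $\mu$ and $\nu$. Applying the identical argument to $Q$ and $Q_2$ produces the quadratic \eqref{E20} with discriminant $4\Tb$, and the same trichotomy is then governed by the sign of $\Tb$.
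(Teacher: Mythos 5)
Your proposal is correct and takes essentially the same route as the paper: the trichotomy is read off from the intersection analysis \eqref{E16}--\eqref{E18} together with $u-v=2g$, and the only substantive step is the $T=0$ tangency claim. The paper settles that step exactly as you outline, by writing the tangent-plane coincidence condition at a point of the common generator as $\left|Q,\frac{\partial Q}{\partial\nu},\frac{\partial Q}{\partial\mu},\frac{\partial Q_1}{\partial\mu_1}\right|=0$, which is equivalent to $\nu_1/\alpha=-\alpha/\nu+b\mu-u$ and agrees with the intersection relation \eqref{E17}$_2$ for all $\nu$ precisely because $u-v=2b\mu$ at the double root $\mu=g/b$.
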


\begin{proof}
If $T=0$ then the condition for the two tangent planes of $Q$ and $Q_1$ to coincide at a point of the common generator is
\bela{E21}
  \left|Q,\frac{\del Q}{\del\nu},\frac{\del Q}{\del\mu},\frac{\del Q_1}{\del\mu_1}\right|=0\quad\Leftrightarrow\quad\frac{\nu_1}{\alpha} = -\frac{\alpha}{\nu} + b\mu - u.
\ela
The latter coincides with \eqref{E17}$_2$ since $u-v=2b\mu$ for $T=0$.
\end{proof}

\section{Discrete projective minimal surfaces}

In analogy with the classical case, we now propose an algebraic definition of discrete projective minimal surface based on a scaling invariance of the Gauss-Mainardi-Codazzi equations. Thus, we first observe that the Gauss-Mainardi-Codazzi equations \eqref{E15} are invariant under the scaling
\bela{E23}
  (a,b,g)\rightarrow\lambda(a,b,g),\quad (\ab,\bb,\g)\rightarrow\lambda^{-1}(\ab,\bb,\g)
\ela
with all other quantities being unchanged. The remaining Gauss-Mainardi-Codazzi equation \eqref{Teqn} becomes
 \begin{equation}
  \lambda^2 \ab \alb \Delta_2T = a \al\Delta_1\T  \label{scTeqn}
 \end{equation}
so that complete invariance is achieved by imposing the equivalent constraints
 \begin{equation}\label{E24}
   \Delta_2 T= 0, \quad \Delta_1 \Tb =0.
 \end{equation}
Hence, the following definition is natural and coincides with that proposed in \cite{McCarthySchief2018}. It constitutes a discrete analogue of the classical algebraic classification outlined in Section 2.

\begin{definition}
A discrete asymptotic net $\Sigma$ is a {\em discrete projective minimal surface} if there exists a lattice of Lie quadrics such that, in terms of the adapted canonical frame, $ \Delta_2 T= 0$ or, equivalently,  $\Delta_1 \Tb= 0$. In particular,
 \begin{itemize}
 \item[(i)] if $T=0$ or $\T = 0$ then $\Sigma$ is termed a {\em discrete Godeaux-Rozet surface},\vspace{0.5mm}
 \item[(ii)] if $T=\Tb=0$ then $\Sigma$ is termed a {\em discrete Demoulin surface}. 
 \end{itemize}
\end{definition}

The constrained Gauss-Mainardi-Codazzi equations \eqref{E15}, \eqref{E24} constitute the discrete analogue of the Euler-Lagrange equations for projective minimal surfaces. Since, for any choice of the sign of $w$ in \eqref{weqn} (which is geometrically irrelevant), this system of discrete equations constitutes a map of the form
\bela{E25}
 (\bv,\bar{\bv}) \mapsto (\bv_2,\bar{\bv}_1),\qquad \bv = (\alpha,a,b,f,g),\quad \bar{\bv} = (\alb,\ab,\bb,\f,\g),
\ela
a discrete projective minimal surface together with the corresponding lattice of Lie quadrics is uniquely determined by the Cauchy data
%
  $\bv(n_1,n_2=0)$ and $\bar{\bv}(n_1=0,n_2)$
%
up to projective transformations.

By construction, the Gauss-Weingarten equations \eqref{Feq1} subject to the scaling \eqref{E23}, that is,
\bela{GW}
 \bear{rcl}
   \left(\begin{array}{l}
             \br\\ \br^1\\ \br^2\\ \br^{12} 
    \end{array}
    \right)_1 &=& 
    \left(\bear{cccc}
      0   &  1/\alpha  &  0      &  0  \\
    -\al  &      f + \lambda g     &  0      &  \lambda b  \\  
      0   &      0     &  0      &  1/\al\\
      0   &      \lambda a     & -\al    &  f - \lambda g  \\
   \ear\right)
\left(\begin{array}{l}
             \br\\ \br^1\\ \br^2\\ \br^{12} 
    \end{array}
    \right) \\[10mm]
  \left(\begin{array}{l}
             \br\\ \br^1\\ \br^2\\ \br^{12} 
    \end{array}
    \right)_2 &=& 
    \left(\bear{cccc}
         0   &      0         &   1/\alb  &    0       \\
         0   &      0         &      0    &  1/\alb\\[1mm]
      -\alb  &      0         &     \f + \g/\lambda   &  \bb/\lambda  \\[1mm]  
         0   &    -\alb       &     \ab/\lambda   &  \f - \g/\lambda   
   \ear\right)
\left(\begin{array}{l}
             \br\\ \br^1\\ \br^2\\ \br^{12} 
    \end{array}
    \right)
  \ear
\ela
are compatible if and only if $\bv$ and $\bar{\bv}$ obey the Gauss-Mainardi-Codazzi equations for discrete projective minimal surfaces. In general, the significance of this $\lambda$-dependent linear system in the context of integrability will be discussed elsewhere. However, in Section 6, we establish a connection with the Lax pair for the nonlinear system underlying discrete Demoulin surfaces.

The above linear system also encodes two linear 2$\times$2 systems which are intimately related to the geometric characterisation of discrete projective minimal surfaces discussed in the next section. Indeed, if we make the formal substitution
%
  $(\r,\r^1,\r^2,\r^{12}) \rightarrow  \lambda^{n_1}(\rho_\infty,\rho^1_\infty,\rho^2_\infty,\rho^{12}_\infty)$
%
then, in the limit $\lambda\rightarrow\infty$, system \eqref{GW} reduces to $\rho_\infty=\rho^2_\infty=0$ and
\bela{E28}
  \left(\bear{c}\rho^1_\infty\\[1mm] \rho^{12}_\infty\ear\right)_1 = \left(\bear{cc}g & b\\ a &- g\ear\right)\left(\bear{c}\rho^1_\infty\\[1mm] \rho^{12}_\infty\ear\right),\quad \left(\bear{c}\rho^1_\infty\\[1mm] \rho^{12}_\infty\ear\right)_2 = \left(\bear{cc} 0 & 1/\alb\\ -\alb & \f\ear\right)\left(\bear{c}\rho^1_\infty\\[1mm] \rho^{12}_\infty\ear\right).
\ela
The determinants of the above matrices are $-T$ and $1$ respectively so that the compatibility condition associated with the above system gives rise to $\Delta_2 T = 0$. In fact, it is readily verified that the above system is compatible modulo the Gauss-Mainardi-Codazzi equations subject to this minimality condition. Similarly, the formal substitution 
%
  $(\r,\r^1,\r^2,\r^{12}) \rightarrow  \lambda^{-n_2}(\rho_0,\rho^1_0,\rho^2_0,\rho^{12}_0)$
%
leads to $\rho_0=\rho^1_0=0$ and
\bela{E30}
  \left(\bear{c}\rho^2_0\\[1mm] \rho^{12}_0\ear\right)_1 = \left(\bear{cc}0 & 1/\alpha\\ -\alpha & f\ear\right)\left(\bear{c}\rho^2_0\\[1mm] \rho^{12}_0\ear\right),\quad \left(\bear{c}\rho^2_0\\[1mm] \rho^{12}_0\ear\right)_2 = \left(\bear{cc} \g & \bb\\ \ab & -\g\ear\right)\left(\bear{c}\rho^2_0\\[1mm] \rho^{12}_0\ear\right)
\ela
in the case $\lambda=0$. Once again, modulo the Gauss-Mainardi-Codazzi equations, the associated compatibility condition yields the minimality condition $\Delta_1\T=0$. 

\section{Envelopes of lattices of Lie quadrics}

It turns out that discrete projective minimal surfaces may be characterised geometrically in terms of envelopes of the associated lattice of Lie quadrics \cite{McCarthySchief2018}. 

\begin{definition}
An {\em envelope} $\Omega$ of a lattice of Lie quadrics $\{Q\}$ associated with a discrete asymptotic net $\Sigma$ is a combinatorially dual discrete asymptotic net represented by a map $\bomega : \Z^2\rightarrow\R^4$ such that the star of $\Omega$ centred at any vertex $\bomega$ touches the corresponding quadric $Q$ at $\bomega\in Q$.
\end{definition}

Given a discrete asymptotic net $\Sigma$ and an associated lattice of Lie quadrics $\{Q\}$ with adapted canonical frame $\F$, let $\Omega$ be a combinatorially dual discrete net represented by a map $\bomega : \Z^2\rightarrow \R^4$ such that each vertex $\bomega$ lies on the corresponding quadric $Q$, that is,
\bela{E30a}
 \begin{split}
   \bomega(n_1,n_2) = \r^{12}(n_1,n_2) &+ \mu(n_1,n_2)\r^1(n_1,n_2)\\ &+ \nu(n_1,n_2)\r^2(n_1,n_2) + \mu(n_1,n_2)\nu(n_1,n_2)\r(n_1,n_2)
 \end{split}
\ela
for some lattice of parameters $\mu=\mu(n_1,n_2)$ and $\nu=\nu(n_1,n_2)$. Once again, we suppress the arguments of $\mu$ and $\nu$ whenever the context reveals whether $\mu$ and $\nu$ are parameters which parametrise the generators of a quadric $Q$ or $\mu$ and $\nu$ refer to specific points on $Q$. Thus, for instance,
\bela{E31}
  \bomega = \r^{12} + \mu\r^1 + \nu\r^2 + \mu\nu\r,\quad
  \bomega_1 = \r^{12}_1 + \mu_1\r^1_1 + \nu_1\r^2_1 + \mu_1\nu_1\r_1
\ela
designates two points on neighbouring quadrics $Q$ and $Q_1$ which may be vertices of an envelope $\Omega$. Now, $\Omega$ constitutes an envelope of the lattice of Lie quadrics if the {\em tangency condition} is satisfied, that is, if any edges $[\bomega,\bomega_1]$ and $[\bomega,\bomega_2]$ are tangent to the corresponding pairs of quadrics $Q,Q_1$ and $Q,Q_2$ respectively. For instance, the condition that an edge $[\bomega,\bomega_1]$ touches the two corresponding quadrics $Q$ and $Q_1$ at $\bomega$ and $\bomega_1$ respectively may be formulated as
\begin{equation}
  \left|\bomega, \bomega_1, \frac{\partial}{\partial\mu}{Q|}_{\bomega}, \frac{\partial}{\partial\nu} 
   {Q|}_{\bomega}\right| = 0, \quad
  \left| \bomega, \bomega_1, \frac{\partial}{\partial\mu_1}{Q_1|}_{\bomega_1},\frac{\partial}{\partial\nu_1} 
   {Q_1|}_{\bomega_1}\right| = 0. 
   \label{tan_cond}
 \end{equation}
Evaluation of the latter leads to the relations
\bela{munu1}
   (\mu_1-\mu)\left(\nu_1+\frac{\al^2}{\nu}+\al f\right) = 0,\quad
   \left(\mu-\frac{g}{b}\right)\left(\mu_1-\frac{g}{b}\right)- \frac{T}{b^2} = 0,
\ela
where we have assumed that $\bomega$ does not lie on the edge common to $Q$ and $Q_1$, that is, $\nu\neq 0$. For reasons of symmetry, the tangency condition in the other direction produces the pair
\bela{munu2}
   (\nu_2-\nu)\left(\mu_2+\frac{\alb^2}{\mu}+\alb \f\right) = 0,\quad
   \left(\nu-\frac{\g}{\bb}\right)\left(\nu_2-\frac{\g}{\bb}\right)- \frac{\T}{\bb^2} = 0.
\ela
Discrete asymptotic nets which admit envelopes of associated lattices of Lie quadrics may therefore be classified as follows.

\subsection{Discrete Q surfaces}

The solutions $\mu(n_1,n_2)$ and $\nu(n_1,n_2)$ of the systems \eqref{munu1} and \eqref{munu2} determine both the classes of discrete asymptotic nets which admit envelopes and the nature of these envelopes. If 
\bela{E32}
  \mu_1 = \mu\quad\Rightarrow\quad\mu = \mu(n_2)
\ela
then \eqref{munu1}$_1$ is identically satisfied and \eqref{munu1}$_2$ reduces to \eqref{E19}. Hence, any edge $[\bomega,\bomega_1]$ is part of a generator common to the quadrics $Q$ and $Q_1$. Thus, any coordinate polygon $\bomega(n_1,n_2=\mbox{const})$ constitutes a straight line which is a generator shared by the quadrics associated with the corresponding strip of quadrilaterals of the discrete asymptotic net. The discrete asymptotic nets which admit this property are evidently characterised by the constraint
\bela{E33}
  b\mathsf{m}^2 - 2g\mathsf{m} - a = 0,\quad \mathsf{m} = \mathsf{m}(n_2)
\ela
on the Gauss-Mainardi-Codazzi equations \eqref{E15}, \eqref{Teqn} and have been termed {\em discrete semi-Q surfaces} in \cite{McCarthySchief2018}. If, in addition,
\bela{E34}
  \nu_2 = \nu\quad\Rightarrow\quad \nu = \nu(n_1)
\ela
then \eqref{munu2}$_2$ reduces to \eqref{E20} so that the coordinate polygons $\bomega(n_1=\mbox{const},n_2)$ constitute straight lines which are likewise embedded in ``strips'' of lattice Lie quadrics. The corresponding additional constraint on the Gauss-Mainardi-Codazzi equations is given by
\bela{E35}
  \bb\mathsf{n}^2 - 2\g\mathsf{n} - \ab = 0,\quad \mathsf{n} = \mathsf{n}(n_1).
\ela
Discrete asymptotic nets constrained by \eqref{E33} and \eqref{E35} are termed discrete Q surfaces. By construction, their geometric definition is as follows \cite{McCarthySchief2018}.

\begin{definition}
A {\em discrete Q surface} is a discrete asymptotic net which admits an envelope of a lattice of Lie quadrics composed of straight lines which are common generators of the corresponding strips of quadrics.
\end{definition}

It is observed that the envelope $\Omega$ associated with a discrete Q surface is composed of two transversal discrete families of straight lines and may therefore be interpreted as a discretisation of a quadric. Moreover, there exists a unique continuous quadric $Q^\Omega$ which passes through those lines. Thus, the straight coordinate polygons of the envelope $\Omega$ are generators of both strips of lattice Lie quadrics and the quadric $Q^\Omega$. This is the discrete analogue of a property of classical $Q$ surfaces \cite{Bol1954}. 

\subsection{Discrete projective minimal surfaces}

We now assume that either $\mu_1\neq\mu$ or $\nu_2\neq\nu$ so that we may focus on the case $\mu_1\neq\mu$ without loss of generality. Accordingly, \eqref{munu1}$_1$ may be solved for $\nu_1$. Then, either \mbox{$\T=0$} so that $\Sigma$ is of discrete Godeaux-Rozet type and therefore discrete projective minimal or $\T\neq0$, in which case \eqref{munu2}$_2$ may be solved for $\nu_2$ to obtain
\bela{E37}
  \nu_1 = -\frac{\al^2}{\nu}-\al f,\quad\nu_2 = \frac{\g\nu+\ab}{\bb\nu-\g}.
\ela
It turns out that the compatibility condition $\nu_{12} = \nu_{21}$ yields $\Delta_1\T=0$ and, hence, $\Sigma$ is, once again, discrete projective minimal. In fact, it is easy to verify that linearisation of the above ``discrete Riccati equations'' leads to the linear system \eqref{E30} with the identification $\nu = -\rho^{12}_0/\rho^2_0$. In conjunction with the analysis presented in the previous section, we are therefore led to the following theorem (cf.\ \cite{McCarthySchief2018}).

\begin{theorem}
The class of discrete asymptotic nets which admit envelopes of associated lattice Lie quadrics coincides with the class of {\em discrete PMQ surfaces}, that is, discrete projective minimal (PM) and Q surfaces.
\end{theorem}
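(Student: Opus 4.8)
The plan is to establish the coincidence of the two classes as a two-sided inclusion, using the tangency conditions \eqref{munu1}, \eqref{munu2} as the algebraic characterisation of the property that a discrete asymptotic net $\Sigma$ admits an envelope of an associated lattice of Lie quadrics. Explicitly, $\Sigma$ admits such an envelope if and only if there exist lattice functions $\mu,\nu:\Z^2\to\R$ obeying \eqref{munu1}, \eqref{munu2}, so the strategy is to read off from the factored structure of \eqref{munu1}$_1$ and \eqref{munu2}$_1$ which class is forced.

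For the inclusion ``admits an envelope $\Rightarrow$ PMQ'', I would split according to whether the first factors in \eqref{munu1}$_1$, \eqref{munu2}$_1$ vanish. If $\mu_1=\mu$ and $\nu_2=\nu$ throughout, then $\mu=\mu(n_2)$, $\nu=\nu(n_1)$, and the surviving relations \eqref{munu1}$_2$, \eqref{munu2}$_2$ collapse to \eqref{E33}, \eqref{E35}; by Section 5(a) this is precisely a discrete Q surface. Otherwise, invoking the $1\leftrightarrow2$, barred$\leftrightarrow$unbarred symmetry of the system I may take $\mu_1\neq\mu$, so that \eqref{munu1}$_1$ forces $\nu_1=-\al^2/\nu-\al f$; splitting on $\Tb$ as in Section 5(b), either $\Tb=0$, whence $\Sigma$ is of discrete Godeaux-Rozet type and hence projective minimal, or $\Tb\neq0$, whence \eqref{munu2}$_2$ gives $\nu_2=(\g\nu+\ab)/(\bb\nu-\g)$ and the closing condition $\nu_{12}=\nu_{21}$ yields $\Delta_1\Tb=0$. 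In every branch $\Sigma$ is discrete PMQ.

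For the reverse inclusion, a discrete Q surface admits an envelope by definition, so only the projective minimal case needs construction. Assuming $\Delta_2T=0$ and, equivalently through \eqref{Teqn}, $\Delta_1\Tb=0$, I would produce the envelope by selecting in \eqref{munu1}$_1$, \eqref{munu2}$_1$ the branches $\nu_1=-\al^2/\nu-\al f$ and $\mu_2=-\alb^2/\mu-\alb\f$ that kill the coupling factors; combined with \eqref{munu1}$_2$ (which gives $\mu_1=(g\mu+a)/(b\mu-g)$) and \eqref{munu2}$_2$ (which gives $\nu_2$), the full tangency system then decouples into two discrete Riccati systems, one for $\mu$ and one for $\nu$. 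Linearising these maps reproduces, up to gauge, the linear systems \eqref{E28} and \eqref{E30}, whose compatibility conditions are $\Delta_2T=0$ and $\Delta_1\Tb=0$ respectively; as $\Sigma$ is minimal both hold, so each system integrates from arbitrary initial data to deliver $\mu(n_1,n_2)$, $\nu(n_1,n_2)$ and thereby an envelope $\bomega=\br^{12}+\mu\br^1+\nu\br^2+\mu\nu\br$.

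The step I expect to be the main obstacle is this reverse construction in the projective minimal case: one must verify that the four tangency equations genuinely decouple along the selected branches and that simultaneous compatibility of the two Riccati systems collapses to the single minimality condition. This hinges on the equivalence $\Delta_2T=0\Leftrightarrow\Delta_1\Tb=0$ provided by \eqref{Teqn} (with $a,\ab,\al,\alb\neq0$) and on the determinant structure of the linearised systems, namely $-T,1$ for \eqref{E28} and $1,-\Tb$ for \eqref{E30}. A further subtlety is the Godeaux-Rozet degeneracy $\Tb=0$ (or $T=0$), where the Riccati map in \eqref{munu2}$_2$ (respectively \eqref{munu1}$_2$) becomes singular and the envelope must instead be extracted from the surviving branch of the factored tangency condition, with a check that the resulting $\bomega$ is a bona fide combinatorially dual asymptotic net rather than a degenerate lattice.
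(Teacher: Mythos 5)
Your proposal is correct and takes essentially the same route as the paper: the same factored tangency conditions \eqref{munu1}, \eqref{munu2}, the same case split ($\mu_1=\mu$, $\nu_2=\nu$ throughout giving discrete Q surfaces via \eqref{E33}, \eqref{E35}; otherwise the discrete Riccati systems \eqref{E37}, \eqref{E38} whose compatibility $\nu_{12}=\nu_{21}$ forces minimality), and the same reverse construction of envelopes for projective minimal surfaces from those Riccati systems and their linearisations \eqref{E28}, \eqref{E30}. The degenerate cases you flag as the main obstacle are resolved in the paper exactly along the line you indicate, namely by taking the surviving branch $\mu_1=g/b$ or $\mu=g/b$ (respectively $\nu_2=\g/\bb$ or $\nu=\g/\bb$) of the factored conditions, which yields the coinciding envelopes of Godeaux-Rozet and Demoulin surfaces.
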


It is emphasised that, strictly speaking, the above theorem is a statement about the ``local'' nature of a discrete asymptotic net since it may be possible to construct ``hybrids'' of discrete projective minimal and Q surfaces which admit envelopes. In view of the continuum limit, we exclude discrete surfaces of this type. Moreover, for the above theorem to be validated, it is still necessary to show that every discrete projective minimal surface admits an envelope of an associated lattice of Lie quadrics. This will be done as part of the classification given below.

\subsubsection{Generic discrete projective minimal surfaces}

A discrete projective minimal surface is termed {\em generic} if $T\neq 0$ and $\T\neq0$. In this case, for generic initial values $\mu(0,0)$ and $\nu(0,0)$, the system \eqref{E37} and its counterpart
\bela{E38}
 \mu_1 = \frac{g\mu+a}{b\mu-g},\quad\mu_2 = -\frac{\alb^2}{\mu}-\alb \f
\ela
uniquely determine the functions $\mu(n_1,n_2)$ and $\nu(n_1,n_2)$. It is noted that the latter system is compatible modulo the minimality condition $\Delta_2T = 0$. Once again, its linearised version is given by \eqref{E28} with the identification \mbox{$\mu = -\rho^{12}_\infty/\rho^1_\infty$}. Since, by construction, the tangency conditions \eqref{munu1} and \eqref{munu2} are satisfied, the following statement may be made.

\begin{theorem}
Generic projective minimal surfaces admit a two-parameter family of envelopes of the associated lattice of Lie quadrics. An envelope is uniquely determined by (generically) prescribing a vertex on one lattice Lie quadric.
\end{theorem}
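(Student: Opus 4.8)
I want to prove that a generic discrete projective minimal surface (with $T\neq0$, $\Tb\neq0$) admits a two-parameter family of envelopes, each envelope being pinned down by the choice of a single vertex $\bomega(0,0)$ on one lattice Lie quadric $Q(0,0)$. The strategy is to reduce the existence-and-uniqueness problem for envelopes to the solvability of the discrete Riccati systems \eqref{E37} and \eqref{E38}, which I treat as an evolution (Cauchy) problem on $\Z^2$. The ``two parameters'' will be exactly the initial data $\mu(0,0)$ and $\nu(0,0)$, i.e.\ the coordinates of the chosen initial vertex on $Q(0,0)$ via the parametrisation \eqref{E30a}.

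First I would recall that an envelope is, by definition, a dual net $\bomega$ with each $\bomega\in Q$ satisfying the tangency conditions \eqref{tan_cond}, which the preceding analysis has already reduced to the algebraic relations \eqref{munu1} and \eqref{munu2} for the parameters $\mu,\nu$. In the generic case $T,\Tb\neq0$, and under the assumption $\mu_1\neq\mu$ (equivalently $\nu_2\neq\nu$), these relations are equivalent to the two Riccati pairs \eqref{E37} and \eqref{E38}. The key point is that together \eqref{E37} and \eqref{E38} prescribe $\mu_1,\mu_2$ and $\nu_1,\nu_2$ as explicit rational functions of $\mu,\nu$ and the Gauss-Mainardi-Codazzi data $a,b,f,g,\ab,\bb,\f,\g$; hence they constitute a well-defined discrete evolution. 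I would therefore set up the Cauchy problem: prescribe $\mu$ and $\nu$ at the single vertex $(0,0)$ (this is the one vertex on $Q(0,0)$, carrying two scalar degrees of freedom), and propagate by the shifts $\cdot_1$ and $\cdot_2$ to fill all of $\Z^2$.

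The crux is compatibility: to obtain a consistently defined net I must verify $\mu_{12}=\mu_{21}$ and $\nu_{12}=\nu_{21}$. For $\nu$ this is precisely the computation already flagged after \eqref{E37}, which yields $\Delta_1\Tb=0$; for $\mu$ the analogous computation on \eqref{E38} yields $\Delta_2T=0$. Since the surface is assumed discrete projective minimal, both minimality conditions \eqref{E24} hold, so both compatibility conditions are satisfied identically on solutions of the Gauss-Mainardi-Codazzi system. This is the step I expect to carry the real weight: rather than grind through the rational-function cross-multiplication directly, I would invoke the linearisations noted in the text, namely that \eqref{E37} linearises to \eqref{E30} under $\nu=-\rho^{12}_0/\rho^2_0$ and \eqref{E38} linearises to \eqref{E28} under $\mu=-\rho^{12}_\infty/\rho^1_\infty$. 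Compatibility of the linear $2\times2$ systems \eqref{E28} and \eqref{E30} was already established modulo minimality, and Riccati compatibility follows from linear compatibility because a fractional-linear (M\"obius) evolution is compatible exactly when its associated linear system is. This reduces the obstacle to a statement already in hand.

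Finally I would close the argument for uniqueness and the parameter count. Given any admissible initial vertex, equivalently any initial pair $(\mu(0,0),\nu(0,0))$ in the generic (non-edge) range $\nu\neq0$, $\mu\neq g/b$ so that \eqref{E37}--\eqref{E38} are non-degenerate, the evolution determines $\mu,\nu$ uniquely on $\Z^2$, and then \eqref{E30a} produces a unique dual net $\bomega$; by construction every tangency condition \eqref{munu1}, \eqref{munu2} holds, so $\bomega$ is an envelope. Conversely any envelope arises this way, since its parameters must satisfy \eqref{E37}--\eqref{E38} and are thus determined by their values at $(0,0)$. Hence the envelopes are in bijection with the two-parameter choice of initial vertex on a single lattice Lie quadric, which is the assertion. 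As a by-product this also discharges the remark preceding the theorem that every discrete projective minimal surface admits an envelope.
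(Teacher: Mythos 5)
Your proposal is correct and follows essentially the same route as the paper: existence and uniqueness are reduced to the discrete Riccati systems \eqref{E37} and \eqref{E38} as a Cauchy problem with the two parameters $\mu(0,0),\nu(0,0)$, with compatibility secured modulo the minimality conditions via the linearisations \eqref{E28} and \eqref{E30}, and tangency holding by construction. The paper's own argument is just a terser version of exactly this, so there is nothing to add beyond noting that both treatments implicitly set aside the ``hybrid'' (semi-Q) envelopes with $\mu_1=\mu$, as the paper does explicitly earlier in Section 5.
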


It is observed that the systems \eqref{E37} and \eqref{E38} interpreted as defining a relation between points on neighbouring lattice Lie quadrics map generators of a quadric to generators of neighbouring quadrics since $\mu_i$ and $\nu_i$ do not depend on $\nu$ and $\mu$ respectively.

\subsubsection{Generic discrete Godeaux-Rozet surfaces}

A {\em generic} discrete Godeaux-Rozet surface is a discrete projective minimal surface for $T=0$ but $\T\neq0$ or vice versa. Thus, if $\T\neq0$ then the system \eqref{E37} uniquely determines a function $\nu(n_1,n_2)$ for any given generic initial datum $\nu(0,0)$. If, in addition, $T=0$ then \eqref{munu1}$_2$ implies that either $\mu_1=g/b$ or $\mu = g/b$. In both cases, the remaining condition \eqref{munu2}$_1$ is satisfied modulo the Gauss-Mainardi-Codazzi equations subject to $T=0$. Hence, there exist two types of envelopes $\Omega$ and $\tilde{\Omega}$ parametrised by
\bela{E39}
 \begin{aligned}
  \omega &= \r^{12} + \mu\r^1 + \nu\r^2 + \mu\nu\r,\quad &\mu &= \frac{g_{\bar{1}}}{b_{\bar{1}}}\\
  \tilde{\omega} &= \r^{12} + \tilde{\mu}\r^1 + \nu\r^2 + \tilde{\mu}\nu\r,\quad &\tilde{\mu} &= \frac{g}{b}
 \end{aligned}
\ela
with $\nu$ being determined by \eqref{E37}. Moreover, a short calculation reveals that
\bela{E40}
  \omega_1 \sim \r^{12} + \mu_1\r^1 + \nu\r^2 + \mu_1\nu\r = \tilde{\omega}
\ela
since $\tilde{\mu}=\mu_1$ so that $\omega(n_1+1,n_2)\sim\tilde{\omega}(n_1,n_2)$. Hence, for any given solution $\nu$ of the system \eqref{E37}, $\omega$ and $\tilde{\omega}$  represent two parametrisations of the same envelope $\Omega=\tilde{\Omega}$ regarded as sets of points in $\P^3$. Geometrically, this is evident since
\bela{E41}
  b\tilde{\mu}^2 - 2g\tilde{\mu} - a \sim ab+g^2 = T = 0
\ela
so that $\tilde{\mu}$ labels the generator common to the quadrics $Q$ and $Q_1$ along which $Q$ and $Q_1$ touch. Thus, the generator of $Q_1$ labelled by $\mu_1$ coincides with the generator of $Q$ labelled by $\tilde{\mu}$ which, in turn, implies that $\omega_1$ is a point not only on the quadric $Q_1$ but also on the quadric $Q$. 

\begin{theorem}
  A generic discrete Godeaux-Rozet surface admits two one-parameter families of envelopes of the associated lattice of Lie quadrics which coincide.
\end{theorem}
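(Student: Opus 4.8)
The plan is to exploit the factorisation of the tangency conditions that occurs precisely when $T=0$. Without loss of generality I take $T=0$ and $\T\neq0$, and (the constant-$\mu$ case corresponding to Q surfaces being already understood) I seek envelopes with $\mu$ non-constant. Since $\T\neq0$, the condition \eqref{munu2}$_2$ is a non-degenerate discrete Riccati equation for $\nu$, namely \eqref{E37}$_2$, while \eqref{munu1}$_1$, with $\mu_1\neq\mu$, forces $\nu_1=-\alpha^2/\nu-\alpha f$, which is \eqref{E37}$_1$. As established above, the system \eqref{E37} is compatible modulo the Gauss-Mainardi-Codazzi equations subject to $\Delta_1\T=0$, a condition that holds here since $\Sigma$ is discrete projective minimal. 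Hence $\nu$ is determined uniquely by the single initial value $\nu(0,0)$, yielding a one-parameter family.

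With $\nu$ in hand I would turn to the remaining $n_1$-tangency condition \eqref{munu1}$_2$. Because $T=0$ it factorises as $(\mu-g/b)(\mu_1-g/b)=0$, so exactly one of the two factors must vanish, producing two \emph{a priori} distinct prescriptions for $\mu$: the branch $\tilde\mu=g/b$ giving the envelope $\tilde\Omega$ with vertices $\tilde\omega$, and the branch $\mu_1=g/b$, equivalently $\mu=g_{\bar{1}}/b_{\bar{1}}$, giving the envelope $\Omega$ with vertices $\omega$, exactly as displayed in \eqref{E39}. In each branch $\mu$ is now a fully determined function on $\Z^2$ with no remaining freedom, so the last tangency condition \eqref{munu2}$_1$ becomes a genuine constraint rather than a definition: since $\nu_2\neq\nu$ generically, it reduces to $\mu_2=-\alb^2/\mu-\alb\f$. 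Verifying that $\mu=g/b$, and likewise $\mu=g_{\bar{1}}/b_{\bar{1}}$, indeed satisfies this relation, by substituting and reducing via the Gauss-Mainardi-Codazzi equations \eqref{E15} with $T=0$, is the step I expect to be the main obstacle; it is the only point at which the constraint $T=0$ must be invoked in full rather than merely to factor the quadratic \eqref{E19}.

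Once both branches are confirmed to furnish genuine one-parameter families of envelopes, the crux of the theorem is to prove that the two families coincide. Here I would apply the frame equations \eqref{Feq1} to $\omega=\r^{12}+\mu\r^1+\nu\r^2+\mu\nu\r$ with $\mu=g_{\bar{1}}/b_{\bar{1}}$ and compute the forward shift $\omega_1$. The expectation, recorded in the short calculation \eqref{E40}, is that $\omega_1\sim\r^{12}+\mu_1\r^1+\nu\r^2+\mu_1\nu\r=\tilde\omega$, since $\mu_1=g/b=\tilde\mu$ and the same solution $\nu$ of \eqref{E37} enters both constructions. Geometrically this is transparent: with $T=0$ the value $\tilde\mu=g/b$ is the double root of \eqref{E19} and hence labels the unique generator along which $Q$ and $Q_1$ touch, so the generator of $Q_1$ carrying $\mu_1$ and the generator of $Q$ carrying $\tilde\mu$ are one and the same line. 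Consequently $\omega(n_1+1,n_2)\sim\tilde\omega(n_1,n_2)$, so $\Omega$ and $\tilde\Omega$ are merely two parametrisations of the same point set in $\P^3$, and the two one-parameter families of envelopes coincide as claimed.
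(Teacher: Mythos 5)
Your proposal is correct and follows essentially the same route as the paper: $\nu$ is fixed by the Riccati system \eqref{E37} (compatible since $\Delta_1\T=0$), the factorisation of \eqref{munu1}$_2$ under $T=0$ yields the two branches \eqref{E39}, the residual condition \eqref{munu2}$_1$ is checked modulo the Gauss-Mainardi-Codazzi equations, and the coincidence of the two families follows from $\omega_1\sim\tilde\omega$ together with the double-root/common-generator argument. The verification you flag as the main obstacle is in fact a two-line computation: \eqref{E15} gives $g_2/b_2=\alb^2 g/a-\alb\f$, and substituting $a=-g^2/b$ (i.e.\ $T=0$) turns this into $g_2/b_2=-\alb^2/(g/b)-\alb\f$, which is exactly the relation $\mu_2=-\alb^2/\mu-\alb\f$ required by \eqref{munu2}$_1$ for the branch $\mu=g/b$, the other branch then being handled by the identification $\omega_1\sim\tilde\omega$.
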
 

\subsubsection{Discrete Demoulin surfaces}

By definition, a discrete Demoulin surface is a discrete projective minimal surface for $T=\T=0$. Hence, the tangency conditions \eqref{munu1} and \eqref{munu2} are satisfied if we choose the functions $\mu(n_1,n_2)$ and $\nu(n_1,n_2)$ such that either $\mu_1=g/b$ or $\mu=g/b$ and either $\nu_2=\g/\bb$ or $\nu = \g/\bb$. Accordingly, there exist four envelopes but the same reasoning as in the previous case leads to the following result.

\begin{theorem}
  A discrete Demoulin surface admits four discrete envelopes of the associated lattice of Lie quadrics which coincide.
\end{theorem}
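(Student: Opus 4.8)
The plan is to list the four envelopes explicitly as the four resolutions of the factorised tangency conditions, and then to reduce their coincidence to two applications of the shift identity \eqref{E40} already established for discrete Godeaux-Rozet surfaces. Since $T=\T=0$, the quadratic factors in \eqref{munu1} and \eqref{munu2} become perfect squares, so that \eqref{munu1}$_2$ forces $\mu=g/b$ or $\mu_1=g/b$ (equivalently $\mu=g_{\bar{1}}/b_{\bar{1}}$), while \eqref{munu2}$_2$ forces $\nu=\g/\bb$ or $\nu_2=\g/\bb$ (equivalently $\nu=\g_{\bar{2}}/\bb_{\bar{2}}$). The four independent combinations of these choices define four parametrisations $\omega^{(1)},\dots,\omega^{(4)}$ of the envelope form \eqref{E30a}, and I would first record them as the four candidate envelopes.

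First I would confirm that each choice yields a genuine envelope, i.e.\ that the residual linear factors in \eqref{munu1}$_1$ and \eqref{munu2}$_1$ also vanish. The $\mu$- and $\nu$-choices already annihilate the perfect-square factors \eqref{munu1}$_2$, \eqref{munu2}$_2$, since $\mu=g/b$ is the double fixed point of the $n_1$-Riccati \eqref{E38}$_1$ for $T=0$ and $\nu=\g/\bb$ is the double fixed point of the $n_2$-Riccati \eqref{E37}$_2$ for $\T=0$. It then remains to verify the cross-direction conditions, namely that $\nu=\g/\bb$ satisfies \eqref{munu1}$_1$, i.e.\ the $n_1$-Riccati \eqref{E37}$_1$, and that $\mu=g/b$ satisfies \eqref{munu2}$_1$, i.e.\ the $n_2$-Riccati \eqref{E38}$_2$. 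Substituting these values and invoking the Gauss-Mainardi-Codazzi equations \eqref{E15} with $T=\T=0$ shows, exactly as in the corresponding verification for Godeaux-Rozet surfaces, that both conditions hold identically.

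Next I would establish the coincidence by transporting the identity \eqref{E40} into both lattice directions. In the $n_1$-direction the two choices $\mu=g/b$ and $\mu=g_{\bar{1}}/b_{\bar{1}}$, evaluated with one and the same $\nu$-function, are related by precisely the computation leading to \eqref{E40}: because $T=0$ the neighbouring quadrics $Q$ and $Q_1$ touch along the common generator labelled $\mu=g/b$, so that $\omega(n_1+1,n_2)\sim\tilde{\omega}(n_1,n_2)$ and the two parametrisations describe the same point set in $\P^3$. By the $1\leftrightarrow2$ symmetry of the construction, the condition $\T=0$ yields the analogous identity in the $n_2$-direction relating $\nu=\g/\bb$ and $\nu=\g_{\bar{2}}/\bb_{\bar{2}}$ for a fixed $\mu$-function. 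Applying the first identity pairs the four envelopes into two, and applying the second collapses these two into one, so that all four coincide.

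The main obstacle I expect is confirming that the Godeaux-Rozet shift identity \eqref{E40} survives the Demoulin specialisation. In that earlier case $\nu$ was a free solution of the Riccati system \eqref{E37}, whereas here it is pinned to the special value $\g/\bb$ (or its $\bar{2}$-shift), and symmetrically $\mu$ is pinned to $g/b$. One must therefore check that this fixed $\nu$ is compatible with the $n_1$-transport underlying \eqref{E40}, i.e.\ that $\g/\bb$ does satisfy $\nu_1=-\al^2/\nu-\al f$ as a consequence of \eqref{E15} with $T=\T=0$, together with the mirror statement for $\mu=g/b$. This is a direct but slightly delicate computation; once it is in hand the two shift identities act on the independent labels $\mu$ and $\nu$ respectively, hence commute, and the pairwise coincidences compose to give the full fourfold coincidence.
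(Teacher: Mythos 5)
Your proposal is correct and follows essentially the same route as the paper: the paper's own proof simply notes that with $T=\T=0$ the tangency conditions are met by the four combinations of $\mu\in\{g/b,\,g_{\bar{1}}/b_{\bar{1}}\}$ and $\nu\in\{\g/\bb,\,\g_{\bar{2}}/\bb_{\bar{2}}\}$ and then invokes ``the same reasoning as in the previous case'', i.e.\ the shift identity \eqref{E40} applied in both lattice directions, exactly as you do. Your explicit verification of the cross-direction Riccati conditions for the pinned values of $\mu$ and $\nu$ is precisely the content the paper delegates to the Godeaux-Rozet argument, so you have merely made the paper's terse proof explicit.
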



\section{Discrete Demoulin surfaces}

We now touch upon the integrability aspects of discrete projective minimal surfaces. It is known (see, e.g., \cite{RogersSchief2002} and references therein) that, in the classical continuous setting, the {\em Pl\"ucker correspondence} between lines in $\P^3$ and points in the four-dimensional Pl\"ucker quadric embedded in a five-dimensional projective space $\P^5$ plays a key role in this connection. It turns out that the same is true in the discrete setting and the Pl\"ucker correspondence also provides a framework in which discrete analogues of the Wilczynski frame may be identified. This is dealt with in a separate publication \cite{SchiefSzereszewski2018}. Here, we exploit some of these connections which may be verified directly without presenting their derivation. 

\subsection{A discrete Demoulin system. A Wilczynski-type frame}

We begin by introducing a frame $\tilde{\F}=(\tilde{\r},\tilde{\r}^1,\tilde{\r}^2,\tilde{\r}^{12})^T$ in $\P^3$ which obeys the linear system
 \begin{equation}
   \Ft_1 = \tilde{L}\Ft, \qquad \Ft_2 = \tilde{M}\Ft, \label{tildeDem}
\end{equation}  
 where 
\bela{LMtilde}
 \begin{aligned}
    \tilde{L} &= \chi\begin{pmatrix}
        1  &            1                   &    0                &    0  \as
        A  &   A+\dfrac{H_1-1}{H_1(H-1)}     &    \dfrac{AK}{K-1}   &    \dfrac{AK}{K-1} \AS  
      K-1  &           K-1                  &    K                &    K\as
        0  &   \dfrac{(H_1-1)(K-1)}{H_1(H-1)}  &    0              &    \dfrac{(H_1-1)K}{H_1(H-1)}
  \end{pmatrix}\as
  \tilde{M} &= \bar{\chi}\begin{pmatrix}
    1  &          0        &    1                &    0  \as
  H-1  &         H         &    H-1                &    H\as 
    Q  &  \dfrac{QH}{H-1}  &   Q+\dfrac{K_2-1}{K_2(K-1)}  &    \dfrac{QH}{H-1}  \AS  
    0  &          0        &  \dfrac{(K_2-1)(H-1)}{K_2(K-1)} &  \dfrac{(K_2-1)H}{K_2(K-1)}
\end{pmatrix}.
\end{aligned}
\ela
The compatibility condition $\tilde{L}_2\tilde{M} = \tilde{M}_1\tilde{L}$ gives rise to the system of equations
\bela{dem12}
\begin{aligned}
  H_{12}&=-\frac{K(H-1)(K_2-1)}{K\big[H(H_1-1)(H_2-1)-(H-1)\big](K_2-1)-AQH_1 K_2 (H_2-1)}\\
 K_{12}&=-\frac{H(K-1)(H_1-1)}{H\big[K(K_1-1)(K_2-1)-(K-1)\big](H_1-1)-AQH_1 K_2 (K_1-1)}
\end{aligned}
\ela
and 
\begin{equation}
  A_2=\frac{H_1}{K}A, \quad Q_1=\frac{K_2}{H}Q \label{Demcomconds1}
\end{equation}   
together with
\bela{E42}
  \chi_2 \bar{\chi} H = \bar{\chi}_1 \chi K.
\ela
Even though the compatibility of the system \eqref{tildeDem} does not require specific knowledge of the individual functions $\chi$ and $\bar{\chi}$, geometric considerations dictate that 
\begin{equation}\label{qqb}
   \chi^2=\frac{(1-H)H_1}{(1-H_1)K} , \quad  \bar{\chi}^2=\frac{(1-K)K_2}{(1-K_2)H}
\end{equation}
which may be shown to obey  $(\chi_2 \bar{\chi} H)^2 = (\bar{\chi}_1 \chi K)^2$ modulo \eqref{dem12} so that the auxiliary condition \eqref{E42} is satisfied if the signs of $\chi$ and $\bar{\chi}$ are chosen appropriately. 

It turns out that the coupled system \eqref{dem12}, \eqref{Demcomconds1} constitutes a discrete version of the Demoulin system \eqref{gmc}$_{\alpha=\beta=0}$ in the form \cite{Finikov1937}
\bela{E43}
  {(\ln h)}_{xy} = h - \frac{\mathsf{a}\mathsf{q}}{hk},\quad {(\ln k)}_{xy} = k - \frac{\mathsf{a}\mathsf{q}}{hk},\quad \mathsf{a}_y=0,\quad \mathsf{q}_x = 0
\ela
with the identification
\bela{E43a}
  p =  \frac{\mathsf{a}}{k},\quad q = \frac{\mathsf{q}}{h},\quad \mathcal{A} = -\mathsf{a},\quad \mathcal{B} = -\mathsf{q}.
\ela
Indeed, the expansion
\bela{E44}
  H = 1 + \frac{\epsilon\delta}{2}h,\quad K = 1 + \frac{\epsilon\delta}{2}k,\quad A = \frac{\epsilon^3}{2}\mathsf{a},\quad Q = \frac{\delta^3}{2}\mathsf{q},\quad x = \epsilon n_1,\quad y = \delta n_2
\ela
leads to the Demoulin system in the limit $\epsilon,\delta\rightarrow 0$. In order to reveal the nature of the frame $\tilde{\F}$, we note that \eqref{qqb} implies the expansion
\bela{E45}
  \chi = 1 - \frac{\epsilon}{2}\frac{h_x}{h} + \cdots,\quad \bar{\chi} = 1 - \frac{\delta}{2}\frac{k_y}{k} + \cdots
\ela
so that the scaling
\bela{E46}
 \tilde{\r}\rightarrow\tilde{\r},\quad\tilde{\r}^1\rightarrow\epsilon\tilde{\r}^1,\quad\tilde{\r}^2\rightarrow\delta\tilde{\r}^2,\quad\tilde{\r}^{12}\rightarrow\epsilon\delta\tilde{\r}^{12}
\ela
leads to the continuum limit
\bela{E47}
 \tilde{\F}_x = \frac{1}{2}\begin{pmatrix}
  -\dfrac{h_x}{h} & 2 & 0 & 0\as
  0 &\dfrac{h_x}{h} & 2\dfrac{\mathsf{a}}{k} & 0\As
  k & 0 & -\dfrac{h_x}{h} & 2\as
  0 & k & 0 &\dfrac{h_x}{h}\end{pmatrix}\tilde{\F},\quad 
  \tilde{\F}_y = \frac{1}{2}\begin{pmatrix}
  -\dfrac{k_y}{k} & 0 & 2 & 0\as
  h & -\dfrac{k_y}{k} & 0 & 2\As
  0 & 2\dfrac{\mathsf{q}}{h} &\dfrac{k_y}{k} & 0\as
  0 & 0 & h &\dfrac{k_y}{k}\end{pmatrix}\tilde{\F}.
\ela
In particular, it is seen that
\bela{E48}
 \begin{gathered}
  \tilde{\r}^1 = \tilde{\r}_x  + \frac{1}{2}\frac{h_x}{h}\tilde{\r},\quad  \tilde{\r}^2 = \tilde{\r}_y  + \frac{1}{2}\frac{k_y}{k}\tilde{\r}\as
\tilde{\r}^{12} = \tilde{\r}_{xy} + \frac{1}{2}\frac{k_y}{k}\tilde{\r}_x + \frac{1}{2}\frac{h_x}{h}\tilde{\r}_y + \left(\frac{1}{4}\frac{h_xk_y}{hk} - \frac{1}{2}\frac{\mathsf{a}\mathsf{q}}{hk}\right)\tilde{\r} 
 \end{gathered} 
\ela
so that comparison with \eqref{C2} shows that $\tilde{\F}$ constitutes a Wilczynski frame for Demoulin surfaces.

\subsection{Connection with the canonical frame}

In order to demonstrate that the discrete frame $\tilde{\F}$ encodes discrete Demoulin surfaces, it is required to find a gauge transformation which maps the frame $\tilde{\F}$ to the canonical frame $\F$ for discrete Demoulin surfaces. If we introduce a gauge matrix $G$ according to
\begin{equation}\label{gauge}   
  \Ft =  G\F = \begin{pmatrix}
       \xi  &            0            &     0      &    0    \\
      -\xi  &           \ka           &     0      &    0    \\  
      -\xi  &            0            &     \kab   &    0    \\
       \xi  &          -\ka           &    -\kab   &   1/\xi \\
\end{pmatrix} \F,
\end{equation}  
wherein the functions $\ka,\kab$ and $\xi$ are constrained by
\begin{equation}\label{E48a}
     \xi_1 = \frac{\ka\xi}{\ka_1 K}, \quad \xi_2=\frac{\kab\xi}{\kab_2 H}, \quad \ka\kab=1,
\end{equation}
then the transformed matrices $L$ and $M$ are given by
\bela{E49}
  L = G_1^{-1}\tilde{L}G,\quad M = G_2^{-1}\tilde{M}G.
\ela
It is noted that, on elimination of (for instance) $\kab$, the remaining pair of equations \eqref{E48a}$_{1,2}$ may be regarded as a linear system for $\xi$, the compatibility of which leads to a nonlinear equation for~$\ka$.

Now, it may be directly verified that evaluation of \eqref{E49} produces matrices $L$ and $M$ which are precisely of the form \eqref{Feq1} with
\bela{E51}
 \begin{aligned}
   a &= \frac{\chi\xi\ka^2}{\ka_1}\frac{A}{(1-K)K},\val   &  \ab &= \frac{\bar{\chi}\xi\kab^2}{\kab_2}\frac{Q}{(1-H)H},\val &
   b &= \frac{\chi}{\xi\ka_1}\frac{AK}{K-1}, \val     &  \bb &= \frac{\bar{\chi}}{\xi\kab_2}\frac{QH}{H-1}\\
  f &= \frac{\chi\ka}{\ka_1}\frac{1-HH_1}{(1-H)H_1}, &  \f &= \frac{\bar{\chi}\kab}{\kab_2}\frac{1-KK_2}{(1-K)K_2}, &
  g &= \frac{\chi\ka}{\ka_1}\frac{A}{1-K}, & \g &= \frac{\bar{\chi}\kab}{\kab_2}\frac{Q}{1-H}\\
  \al &= \frac{\xi}{\chi\ka_1 K},  &  \alb &= \frac{\xi}{\bar{\chi}\kab_2 H}.
 \end{aligned}
\ela
Accordingly, the frame $\F$ is indeed canonical. Moreover, it is evident that
\bela{E52}
  ab + g^2 = 0,\quad \ab\bb + \g^2 = 0 
\ela
so that $\F$ is the canonical frame associated with discrete Demoulin surfaces. Finally, the invariance
\bela{E53}
  A \rightarrow \lambda A,\quad Q\rightarrow \lambda^{-1}Q
\ela
of the discrete Demoulin system \eqref{dem12}, \eqref{Demcomconds1} injects a parameter into the frame equations \eqref{tildeDem} and corresponds to the scaling \eqref{E23} as may be inferred from \eqref{E51}.

\subsection{Discrete Tzitz\'eica surfaces}

In order to make contact with some established results in discrete differential geometry, we observe that the discrete Demoulin system admits the reduction $K=H$, leading to the standard discretisation \cite{Schief1999}
\bela{tzitzeica}
 \begin{gathered}
  H_{12}=-\frac{H(H-1)}{H[H(H_1-1)(H_2-1)-(H-1)]-AQH_1 H_2}\\
   A_2=\frac{H_1}{H}A, \quad Q_1=\frac{H_2}{H}Q
 \end{gathered}
\end{equation}  
of the classical {\em Tzitz\'eica equation} given by \eqref{E43} with $h=k$ governing affine spheres (see \cite{Schief2000} and references therein). Moreover, \eqref{qqb} shows that $\chi$ and $\bar{\chi}$ are of the algebraic form \mbox{$\chi = \varphi/\varphi_1$} and $\bar{\chi} = \varphi/\varphi_2$ so that introduction of the scaled Wilczynski frame $\hat{\F} = \varphi\tilde{\F}$ leads to the frame equations
\bela{LMhat}
  \begin{aligned}
    \hat{\F}_1 &= \begin{pmatrix}
        1  &            1                   &    0                &    0  \as
        A  &   A+\dfrac{H_1-1}{H_1(H-1)}     &    \dfrac{AH}{H-1}   &    \dfrac{AH}{H-1} \AS  
       H-1  &           H-1                  &    H                &    H\as
        0  &   \dfrac{H_1-1}{H_1}  &    0              &    \dfrac{(H_1-1)H}{H_1(H-1)}
\end{pmatrix}\hat{\F}\as
    \hat{\F}_2 &= \begin{pmatrix}
    1  &          0        &    1                &    0  \as
  H-1  &         H         &    H-1                &    H\as 
    Q  &  \dfrac{QH}{H-1}  &   Q+\dfrac{H_2-1}{H_2(H-1)}  &    \dfrac{QH}{H-1}  \AS  
    0  &          0        &  \dfrac{H_2-1}{H_2} &  \dfrac{(H_2-1)H}{H_2(H-1)}
\end{pmatrix}\hat{\F}.
  \end{aligned}
\ela
These may be formulated as
\bela{E54}
  \hat{\r}^1 = \hat{\r}_1 - \hat{\r},\quad \hat{\r}^2 = \hat{\r}_2 - \hat{\r},\quad \hat{\r}^{12} = \frac{1}{H}\hat{\r}_{12} - \hat{\r}_1 - \hat{\r}_2 + \hat{\r}
\ela
together with
\bela{E55}
 \begin{aligned}
  \hat{\r}_{11} - \hat{\r}_1 & = \frac{H_1-1}{H_1(H-1)}(\hat{\r}_1 - \hat{\r}) + \frac{A}{H-1}(\hat{\r}_{12} - \hat{\r}_1)\\
  \hat{\r}_{22} - \hat{\r}_2 & = \frac{H_2-1}{H_2(H-1)}(\hat{\r}_2 - \hat{\r}) + \frac{Q}{H-1}(\hat{\r}_{12} - \hat{\r}_2).
 \end{aligned}
\ela
If we now introduce affine coordinates $\hat{\r} = (\r^a,1)$ and note that \eqref{E55} implies that
\bela{E56}
  \Delta_i\left(\frac{1}{H-1}[\hat{\r}_{12} + \hat{\r} - H(\hat{\r}_1 + \hat{\r}_2)]\right) = 0
\ela
then integration of the latter and application of an appropriate translation $\hat{\r} \rightarrow\hat{\r} + \mbox{\bf const}$ result in the standard Gauss-Weingarten equations for {\em discrete affine spheres} in (centro-)affine geometry, namely \cite{BobenkoSchief1999}
\bela{E57}
 \begin{aligned}
  \r^a_{11} - \r^a_1 & = \frac{H_1-1}{H_1(H-1)}(\r^a_1 - \r^a) + \frac{A}{H-1}(\r^a_{12} - \r^a_1)\\
  \r^a_{12} + \r^a & = H(\r^a_1 + \r^a_2)\\
  \r^a_{22} - \r^a_2 & = \frac{H_2-1}{H_2(H-1)}(\r^a_2 - \r^a) + \frac{Q}{H-1}(\r^a_{12} - \r^a_2).
 \end{aligned}
\ela
We therefore conclude that the frame equations \eqref{LMhat} or, equivalently, \eqref{E55} encode the class of projective transforms of discrete affine spheres. It is emphasised that the Gauss-Weingarten equations \eqref{E57} for discrete affine spheres have originally been derived in \cite{BobenkoSchief1999} in an entirely different geometric manner.

In \cite{McCarthySchief2018}, the specialisation of discrete Demoulin surfaces to (projective transforms of) discrete Tzitz\'eica surfaces has been characterised geometrically in terms of discrete line congruences. This result will now be complemented by deriving the algebraic reduction of the discrete Gauss-Mainardi-Codazzi equations \eqref{E15} with
\bela{E58}
 a = -\frac{g^2}{b},\quad \ab = -\frac{\g^2}{\bb}
\ela
which corresponds to discrete Tzitz\'eica surfaces. In this connection, the key observation is that the system \eqref{E51} implies the identity
\bela{E59}
  \left(\frac{g}{b}\right)_{12} \left(\frac{\g}{\bb}\right)_1 = \left(\frac{\g}{\bb}\right)_{12} \left(\frac{g}{b}\right)_2
\ela
which is now regarded as a constraint on the Gauss-Mainardi-Codazzi equations. It implies that there exists a potential $\tau$ such that
\bela{E60}
  \frac{g}{b} = \frac{\tau_{12}}{\tau_1},\quad\frac{\g}{\bb} = \frac{\tau_{12}}{\tau_2}.
\ela
If we regard the latter as a parametrisation of the functions $b$ and $\bb$ then the Gauss-Mainardi-Codazzi equations \eqref{E15}$_{4,8}$ become
\bela{E61}
  \tau_{112} + \alpha (f\tau_{12} + \alpha\tau_2) = 0,\quad \tau_{122} + \alb(\f\tau_{12} + \alb\tau_1) = 0
\ela
which may be simplified to
\bela{E62}
  \tau_{11} + \alpha (f\tau_1 + \alpha\tau) = 0,\quad \tau_{22} + \alb(\f\tau_2 + \alb\tau) = 0
\ela
since \eqref{E61} is seen to coincide with \eqref{E62}$_1$ and \eqref{E62}$_2$ shifted in the directions $n_2$ and $n_1$ respectively. Finally, if we regard \eqref{E62} as a linear system for the function $\tau$ then it is required to examine the corresponding compatibility condition $\tau_{1122}=\tau_{2211}$ or, equivalently, the compatibility condition ${(\tau_{112})}_2 = {(\tau_{122})}_1$ associated with \eqref{E61}. It turns out that this compatibility condition does not generate any additional relation and, hence, the constraint \eqref{E59} has been shown to be admissible.

The nature of the function $\tau$ is revealed by verifying that
\bela{E63}
   \mathsf{s}(n_1) = \alpha g \left(\frac{\tau_2}{\tau_{12}} - \frac{\tau}{\tau_1}\right),\quad  \bar{\mathsf{s}}(n_2) = \alb \g \left(\frac{\tau_1}{\tau_{12}} - \frac{\tau}{\tau_2}\right)
\ela
constitute first integrals so that $\tau$ may be shown to obey the equation
\bela{E64}
  \left|\bear{ccc}\tau&\tau_1&\tau_{11}\\ \tau_2&\tau_{12}&\tau_{112}\\ \tau_{22}&\tau_{122}&\tau_{1122}\ear\right| + \mathsf{s}\bar{\mathsf{s}}\tau_{12}^3 = 0.
\ela
The latter is known to be the {\em$\tau$-function} representation \cite{Schief1999} of the discrete Tzitz\'eica system \eqref{tzitzeica}. Indeed, the pair \eqref{tzitzeica}$_{2,3}$ may be regarded as the compatibility conditions for the existence of a function $\tau$ defined according to
\bela{E65}
  \tau_{11} = \mathsf{s}\frac{\tau_1^2}{\tau A},\quad \tau_{12} = \frac{\tau_1\tau_2}{\tau H},\quad \tau_{22} = \bar{\mathsf{s}}\frac{\tau_2^2}{\tau Q}.
\ela
Hence, if we regard the above relations as a parametrisation of the functions $H$ and $A,Q$ in terms of $\tau$ then the remaining discrete Tzitz\'eica equation \eqref{tzitzeica}$_1$ coincides with \eqref{E64}.

\section{A B\"acklund transformation for the discrete Demoulin system}

We now exploit the afore-mentioned Pl\"ucker correspondence to derive a {\em B\"acklund transformation} \cite{RogersSchief2002} for the discrete Demoulin system \eqref{dem12}, \eqref{Demcomconds1}, thereby rendering discrete Demoulin surfaces integrable. Thus, a line in $\P^3$ passing through two points $\bolda = (a^0,a^1,a^2,a^3)$ and \mbox{$\boldb=(b^0,b^1,b^2,b^3)$} may be represented by the exterior product
\bela{E66}
  \bolda\wedge\boldb = (p^{01},p^{23},p^{02},p^{13},p^{03},p^{12}),\qquad p^{ik} = \left|\bear{cc} a^i & a^k\\ b^i & b^k\ear\right|.
\ela
The identity
\bela{E68}
  p^{01}p^{23} - p^{02}p^{13} + p^{03}p^{12} = 0
\ela
shows that the {\em Pl\"ucker coordinates} $p^{ik}$ of the line constitute homogeneous coordinates of a four-dimensional quadric which is embedded in a five-dimensional projective space $\P^5$. As in the continuous setting \cite{RogersSchief2002}, we introduce quantities $\bvpt,\bpsit\in\P^5$ defined by
 \begin{equation}\label{E69}
   \bvpt = \frac{1}{2}\left(\btr^1 \wedge \btr^2 + \btr\wedge\btr^{12}\right), \quad  
   \bpsit = \frac{1}{2}\left(\btr^2 \wedge \btr^1 + \btr\wedge\btr^{12}\right).
 \end{equation}
The Wilczynski frame equations \eqref{tildeDem} then give rise to the linear system
\bela{psi12}
\begin{aligned}
  \bvpt_{11} &= \bvpt_1+\frac{1-H_1}{(1-H)H_1} (\bvpt_1-\bvpt) + \frac{\lambda A}{K-1}(\bpsit_{12}-\bpsit_1)\\
  \bvpt_{22} &= \bvpt_2+\frac{1-H_2}{(1-H)H_2} (\bvpt_2-\bvpt) + \frac{B}{\lambda(K-1)}(\bpsit_{12}-\bpsit_2)\\
  \bvpt_{12} &= -\bvpt + H(\vpt_1+\vpt_2)\\
  \bpsit_{11} &= \bpsit_1+\frac{1-K_1}{(1-K)K_1} (\bpsit_1-\bpsit) + \frac{\lambda P}{H-1}(\bvpt_{12}-\bvpt_1)\\
  \bpsit_{22} &= \bpsit_2+\frac{1-K_2}{(1-K)K_2} (\bpsit_2-\bpsit) + \frac{Q}{\lambda(H-1)}(\bvpt_{12}-\bvpt_2)\\
  \bpsit_{12} &= -\bpsit + K(\bpsit_1+\bpsit_2),
\end{aligned}
\ela
where
 \begin{equation}
  B = \frac{(H_2-1)(K-1)K_2}{(H-1)(K_2-1)H_2}Q, \quad P = \frac{(H-1)(K_1-1)H_1}{(H_1-1)(K-1)K_1}A \label{BP}
 \end{equation}
and the parameter $\lambda$ has been inserted via the scaling \eqref{E53} so that the geometrically relevant case is obtained by setting $\lambda=1$. By construction, the associated compatibility conditions reproduce the discrete Demoulin system \eqref{dem12}, \eqref{Demcomconds1}. It is noted that the quantities $P$ and $B$ obey the analogue of the pair \eqref{Demcomconds1}, namely,
\begin{equation}
  P_2 = \frac{K_1}{H}P, \quad B_1 = \frac{H_2}{K}B. \label{B1P2}
\end{equation}
Moreover, in the admissible reduction $H=K$, $\bvpt=\bpsit$ and $P=A$, $B=Q$, the linear system \eqref{psi12} coincides formally with the affine Gauss-Weingarten equations \eqref{E57} for discrete Tzitz\'eica surfaces. It should also be mentioned that the Wilczynski frame $\Ft$ may be reconstructed from
the pair $(\bvpt,\bpsit)$.

We are now in a position to formulate the B\"acklund transformation.

\begin{theorem}
The discrete Demoulin system \eqref{dem12}, \eqref{Demcomconds1} and \eqref{BP} together with the associated linear system \eqref{psi12} is invariant under the substitution
\bela{E70}
 \begin{aligned}
  \bvpt' & = \frac{\bS}{\vpt^\circ},\val &  H' &= \frac{\vpt_1^\circ\vpt_2^\circ}{\vpt_{12}^\circ\vpt^\circ}H,\val & A' &= \frac{\vpt_1^\circ\psit_1^\circ}{\vpt_{11}^\circ\psit^\circ}A,\val & B' &= \frac{\vpt_2^\circ\psit_2^\circ}{\vpt_{22}^\circ\psit^\circ} B\\
  \bpsit' & = \frac{\bT}{\psit^\circ},\val &  K' &= \frac{\psit_1^\circ\psit_2^\circ}{\psit_{12}^\circ\psit^\circ}K,\val & P' &= \frac{\psit_1^\circ\vpt_1^\circ}{\psit_{11}^\circ\vpt^\circ}P,\val & Q' &=  \frac{\psit_2^\circ\vpt_2^\circ}{\psit_{22}^\circ\vpt^\circ}Q,
 \end{aligned} 
\ela
where
\bela{E71}
 \begin{aligned}
 \bS = c^3\vpt^\circ\bvpt - 2c^1\psit^\circ\bpsit & - \frac{H}{H-1}(c^2\Delta_1\vpt^\circ\Delta_2\bvpt + c^0\Delta_2\vpt^\circ\Delta_1\bvpt)\\
                                                                       & + \frac{K}{K-1}c^1(\Delta_1\psit^\circ\Delta_2\bpsit + \Delta_2\psit^\circ\Delta_1\bpsit)\\[1mm]
  \bT = c^3\psit^\circ\bpsit - 2c^1\vpt^\circ\bvpt & - \frac{K}{K-1}(c^2\Delta_1\psit^\circ\Delta_2\bpsit + c^0\Delta_2\psit^\circ\Delta_1\bpsit)\\
                                                                       & + \frac{H}{H-1}c^1(\Delta_1\vpt^\circ\Delta_2\bvpt + \Delta_2\vpt^\circ\Delta_1\bvpt)
 \end{aligned}
\ela
with coefficients
\bela{E72}
  c^0 = \frac{\lambda_\circ^2}{\lambda^2-\lambda_\circ^2},\quad
  c^1 = \frac{\lambda_\circ\lambda}{\lambda^2-\lambda_\circ^2},\quad 
  c^2 = \frac{\lambda^2}{\lambda^2-\lambda_\circ^2},\quad
  c^3 = \frac{\lambda^2+\lambda_\circ^2}{\lambda^2-\lambda_\circ^2}
\ela
and $(\vpt^\circ,\psit^\circ)$ is a scalar solution of the linear system \eqref{psi12} corresponding to a parameter $\lambda_\circ$ subject to the admissible constraint
\bela{E73}
 {(\vpt^\circ)}^2 - \frac{H}{H-1}\Delta_1\vpt^\circ\Delta_2\vpt^\circ  =  {(\psit^\circ)}^2  - \frac{K}{K-1}\Delta_1\psit^\circ\Delta_2\psit^\circ.
\ela
\end{theorem}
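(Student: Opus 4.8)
The plan is to prove the theorem entirely at the level of the linear problem. Since the discrete Demoulin system \eqref{dem12}, \eqref{Demcomconds1}, \eqref{BP} arises precisely as the compatibility conditions of the linear system \eqref{psi12}, and since \eqref{E70} simultaneously prescribes the transformed dependent variables $(\bvpt',\bpsit')$ and the transformed fields $H',K',A',B',P',Q'$, it suffices to show that $(\bvpt',\bpsit')$ obey \eqref{psi12} with the primed fields inserted and with the \emph{same} parameter $\lambda$. Form-invariance of the nonlinear system then follows at once by forming the compatibility conditions of the transformed linear system; in particular the primed analogues of the algebraic relations \eqref{Demcomconds1} and \eqref{B1P2} are automatic. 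Since the Wilczynski frame $\Ft$ is reconstructible from $(\bvpt,\bpsit)$, invariance at the $\P^5$ level yields a genuine transformation of discrete Demoulin surfaces.

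First I would dispose of the admissibility of the constraint \eqref{E73}. Writing $\Phi = (\vpt^\circ)^2 - \frac{H}{H-1}\Delta_1\vpt^\circ\,\Delta_2\vpt^\circ$ and $\Psi = (\psit^\circ)^2 - \frac{K}{K-1}\Delta_1\psit^\circ\,\Delta_2\psit^\circ$, I would compute $\Delta_1(\Phi-\Psi)$ and $\Delta_2(\Phi-\Psi)$ and show that both vanish modulo the seed linear system \eqref{psi12} evaluated at $\lambda=\lambda_\circ$, using chiefly the scalar reductions of \eqref{psi12}$_3$ and \eqref{psi12}$_6$ which express $\vpt^\circ_{12}$ and $\psit^\circ_{12}$. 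This identifies $\Phi-\Psi$ as a first integral of the lattice, so that imposing \eqref{E73} at a single vertex propagates it everywhere; hence the constraint is admissible and may be carried along throughout the computation.

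The core of the proof is then the computation of the shifts of $\bvpt'=\bS/\vpt^\circ$ and $\bpsit'=\bT/\psit^\circ$. I would first establish the lowest-order relations, namely the transformed versions $\bvpt'_{12}=-\bvpt'+H'(\bvpt'_1+\bvpt'_2)$ and $\bpsit'_{12}=-\bpsit'+K'(\bpsit'_1+\bpsit'_2)$ of \eqref{psi12}$_3$ and \eqref{psi12}$_6$: expanding $\bS_1,\bS_2,\bS_{12}$ by means of \eqref{psi12} for $(\bvpt,\bpsit)$ at $\lambda$ and for $(\vpt^\circ,\psit^\circ)$ at $\lambda_\circ$ should reproduce exactly the ratios defining $H'$ and $K'$ in \eqref{E70}. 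I would then bootstrap to the second-order equations \eqref{psi12}$_{1,2,4,5}$, whose off-diagonal (spectral) terms fix the transformed fields; matching these terms against the prescribed ratios $A',B',P',Q'$ in \eqref{E70} completes the verification.

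The main obstacle is the bookkeeping in this last step: the bilinear quantities $\bS,\bT$ of \eqref{E71} are quadratic in the frame data, so each lattice shift generates many terms, and the collapse to the primed linear system relies on two facts working in concert. First, the constraint \eqref{E73} is needed to simplify the diagonal quadratic contributions, those built from the same argument $\vpt^\circ$ or $\psit^\circ$ in both factors. Second, the precise form of the coefficients \eqref{E72}, governed entirely by the single combination $\lambda^2-\lambda_\circ^2$, is exactly what forces the cancellation of the cross-parameter terms mixing the $\lambda$- and $\lambda_\circ$-dependent pieces; this cancellation is the structural reason the transformation exists and is the heart of the binary Darboux mechanism underlying \eqref{E70}. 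A convenient consistency check along the way is the reduction $H=K$, $\bvpt=\bpsit$, $P=A$, $B=Q$, under which \eqref{psi12} collapses to the affine Gauss-Weingarten equations \eqref{E57} and \eqref{E70} must restrict to the known B\"acklund transformation for discrete Tzitz\'eica surfaces.
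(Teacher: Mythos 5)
Your overall strategy---prove invariance at the level of the linear system \eqref{psi12} and let the nonlinear system follow by compatibility, with \eqref{E73} handled as a first integral---matches the skeleton of the paper's proof, and your treatment of the admissibility of \eqref{E73} is exactly the paper's argument. Where you genuinely diverge is in the core computation. The paper never shifts the closed-form expressions \eqref{E71} directly: it defines $\bS$ and $\bT$ as \emph{bilinear potentials} via the first-order system \eqref{E74}, which is compatible modulo the discrete Moutard equations \eqref{psi12}$_{3,6}$, and invokes the known invariance of those equations under the discrete Moutard transformation \eqref{E70}$_{1,2,5,6}$ (Nimmo--Schief, Bobenko--Schief). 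The substantive work then reduces to (i) checking that the expressions \eqref{E71} satisfy the first-order system \eqref{E74} when $(\vpt^\circ,\psit^\circ)$ solves the full system \eqref{psi12} at $\lambda_\circ$, so that \eqref{E71} is recognised as the integral of \eqref{E74} with the two additive constants forced to vanish, and (ii) verifying the coefficient transformations \eqref{E70}$_{3,4,7,8}$ using \eqref{E73}. Your route---expanding $\bS_1$, $\bS_2$, $\bS_{12}$ from the quadratic expressions and matching against the primed system---would, if pushed through, prove the same statement and has the merit of being self-contained, but the bookkeeping you flag as the main obstacle is precisely what the potential system \eqref{E74} is designed to eliminate: verifying a first-order ansatz is far lighter than computing double shifts of bilinear expressions, and it moreover explains \emph{why} the closed forms \eqref{E71} exist at all (binary-Darboux/Moutard structure, as you correctly intuit).

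One point you should repair: reducing the theorem to ``$(\bvpt',\bpsit')$ satisfies \eqref{psi12} with primed fields'' does not quite cover the full claim, because the relations \eqref{BP} linking $B,P$ to $A,Q,H,K$ are \emph{not} compatibility conditions of the linear system; they are an additional algebraic constraint. You list the primed analogues of \eqref{Demcomconds1} and \eqref{B1P2} as automatic (which is fine) but never address \eqref{BP} for the primed quantities; its invariance must be checked directly from \eqref{E70} together with the scalar system for $(\vpt^\circ,\psit^\circ)$, and this is part of what the paper settles when it asserts that $A,B,P,Q$ transform correctly ``provided that the constraint \eqref{E73} holds.'' A smaller imprecision: the first-integral property of the difference of the two sides of \eqref{E73} cannot be obtained ``chiefly'' from the scalar Moutard equations \eqref{psi12}$_{3,6}$; the coupling terms in the second-order equations \eqref{psi12}$_{1,2,4,5}$ (and the relations \eqref{BP}) are essential, since it is only in the \emph{difference} of the two sides that the cross terms involving both $\vpt^\circ$ and $\psit^\circ$ cancel.
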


\begin{proof}
The {\em discrete Moutard equations} \eqref{psi12}$_{3,6}$ are known to be invariant under the {\em discrete Moutard transformation} \cite{NimmoSchief1997,BobenkoSchief1999} \eqref{E70}$_{1,2,5,6}$, where $\vpt^\circ$ and $\psit^\circ$ are scalar solutions of the discrete Moutard equations and the ``bilinear potentials'' $\bS$ and $\bT$ are defined by the compatible system
\bela{E74}
 \begin{aligned}
  \Delta_1\bS &= \vpt^\circ\bvpt_1 - \vpt_1^\circ\bvpt,\val & \Delta_2\bS &= \vpt_2^\circ\bvpt - \vpt^\circ\bvpt_2\\
  \Delta_1\bT &= \psit^\circ\bpsit_1 - \psit_1^\circ\bpsit,\val & \Delta_2\bT &= \psit_2^\circ\bpsit - \psit^\circ\bpsit_2.
 \end{aligned}
\ela
Now, if $(\vpt^\circ,\psit^\circ)$ is also a solution of the remaining linear equations \eqref{psi12}$_{1,2,4,5}$ then the solution of \eqref{E74} may be verified to be given by \eqref{E71} up to two additive constants of integration. These two constants are required to vanish if the complete linear system \eqref{psi12} is to be invariant. The quantities $A,B$ and $P,Q$ transform according to \eqref{E70}$_{3,4,7,8}$ provided that the constraint \eqref{E73} holds. It turns out that the difference of the left- and right-hand sides of this constraint is constant and, hence, it is possible to choose $\vpt^\circ$ and $\psit^\circ$ in such a manner that this constant is zero.
\end{proof}

In conclusion, it is observed that the B\"acklund transforms \eqref{E70}$_{2,3,4,6,7,8}$ suggest introducing the parametrisation
\bela{E75}
 \begin{aligned}
  H &= \frac{\tau_1\tau_2}{\tau_{12}\tau},   &\val   A &= \frac{\tau_1\sigma_1}{\tau_{11}\sigma},   &\val   
      B &= \frac{\tau_2\sigma_2}{\tau_{22}\sigma}\\[1mm]
  K &= \frac{\sigma_1\sigma_2}{\sigma_{12}\sigma},   &   P &= \frac{\sigma_1\tau_1}{\sigma_{11}\tau},   &   
      Q &= \frac{\sigma_2\tau_2}{\sigma_{22}\tau}
 \end{aligned}
\ela
so that the transformation formulae 
\bela{E76}
  \tau' = \vpt^\circ\tau,\quad \sigma' = \psit^\circ\sigma
\ela
result. Indeed, the pairs \eqref{Demcomconds1} and \eqref{B1P2} may be regarded as the compatibility conditions for the existence of the potentials $\tau$ and $\sigma$. The discrete Demoulin system \eqref{dem12}  then adopts the form 
\bela{E77}
 \begin{aligned}
   \begin{vmatrix}
     \s_2     & \s_{12} \\
     \s_{22}  & \s_{122}
   \end{vmatrix}
   \begin{vmatrix}
     \tau      &    \tau_1     &    \tau_{11}\\
     \tau_2    &    \tau_{12}  &    \tau_{112}\\ 
     \tau_{22} &    \tau_{122} &    \tau_{1122}\\
   \end{vmatrix} &+ 
   \begin{vmatrix}
     \tau_2     & \tau_{12} \\
     \tau_{22}  & \tau_{122}
   \end{vmatrix} \s_{12}^2\tau_{12} = 0 \\[1mm]
   \begin{vmatrix}
     \tau_1     & \tau_{11} \\
     \tau_{12}  & \tau_{112}
   \end{vmatrix}
   \begin{vmatrix}
     \s      &    \s_2     &    \s_{22}\\
     \s_1    &    \s_{12}  &    \s_{122}\\ 
     \s_{11} &    \s_{112} &    \s_{1122}\\
   \end{vmatrix} &+ 
   \begin{vmatrix}
     \s_1     & \s_{11} \\
     \s_{12}  & \s_{112}
   \end{vmatrix} \tau_{12}^2\s_{12} = 0
 \end{aligned}
\ela
which constitutes a two-component generalisation of the discrete Tzitz\'eica equation \eqref{E64}. However, it is important to note that the above system is constrained by the relations \eqref{BP} which read
\bela{E78}
 \begin{aligned}
  \begin{vmatrix}
     \tau       & \tau_{1} \\
     \tau_{2}   & \tau_{12}
  \end{vmatrix}
  \begin{vmatrix}
     \s_2     & \s_{22} \\
     \s_{12}  & \s_{122}
   \end{vmatrix} &=
  \begin{vmatrix}
     \s         & \s_{1} \\
     \s_{2}     & \s_{12}
  \end{vmatrix}
  \begin{vmatrix}
     \tau_2     & \tau_{22} \\
     \tau_{12}  & \tau_{122}
   \end{vmatrix}                   \\[1mm]
   \begin{vmatrix}
     \tau       & \tau_{1} \\
     \tau_{2}   & \tau_{12}
  \end{vmatrix}
  \begin{vmatrix}
     \s_1     & \s_{11} \\
     \s_{12}  & \s_{112}
   \end{vmatrix} &=
  \begin{vmatrix}
     \s         & \s_{1} \\
     \s_{2}     & \s_{12}
  \end{vmatrix}
  \begin{vmatrix}
     \tau_1     & \tau_{11} \\
     \tau_{12}  & \tau_{112}
   \end{vmatrix} .
   \end{aligned}
\ela
By construction, the latter are compatible with the pair \eqref{E77} which may be seen directly by verifying that \eqref{E78}$_1$ and \eqref{E78}$_2$ are invariant under shifts in the directions $n_1$ and $n_2$ respectively modulo \eqref{E77}. In the case of the Tzitz\'eica reduction $\tau=\sigma$, these constraints are identically satisfied.  

\bigskip

\noindent
{\bf Funding.} This work was supported by the Australian Research Council (ARC Discovery Project DP140100851).

\end{document}